\newcommand{\divergence}{\mathrm{div}}
\newcommand{\dist}{\mathrm{dist}}
\let\OLDthebibliography\thebibliography
\renewcommand\thebibliography[1]{
	\OLDthebibliography{#1}
	\setlength{\parskip}{1pt}
	\setlength{\itemsep}{1pt plus 0.3ex}
}
\newtheoremstyle{mystyle}
{}
{}
{\itshape}
{}
{\bfseries}
{}
{ }
{}
\begin{document}

\theoremstyle{mystyle}
\newtheorem{theorem}{Theorem}[section]
\newtheorem*{theorem*}{Theorem}
\newtheorem{lemma}[theorem]{Lemma}
\newtheorem{proposition}[theorem]{Proposition}
\newtheorem{corollary}[theorem]{Corollary}
\newtheorem{prob}[theorem]{Problem}
\newtheorem{aim}[theorem]{Aim}
\theoremstyle{definition}  
\newtheorem{definition}[theorem]{Definition}
\theoremstyle{remark}
\newtheorem{remark}[theorem]{Remark}

\renewcommand{\thefootnote}{\fnsymbol{footnote}}

\title{Nonexistence of Minimizers for a Nonlocal Perimeter Functional with a Riesz and a Background Potential}

\author{
Fumihiko Onoue\footnote{Scuola Normale Superiore, Piazza Cavalieri 7, 56126 Pisa, Italy. E-mail: {\tt fumihiko.onoue@sns.it}}
}


\maketitle

\begin{abstract}
We consider the nonexistence of minimizers for the energy containing a nonlocal perimeter with a general kernel $K$, a Riesz potential, and a background potential in $\mathbb{R}^N$ with $N\geq2$ under the volume constraint. We show that the energy has no minimizer for a sufficiently large mass under suitable assumptions on $K$. The proof is based on the partition of a minimizer and the comparison of the sum of the energy for each part with the energy for the original configuration. This strategy is shown in \cite{FKN} and \cite{lamanna1}.
\end{abstract}

\section{Introduction}
We study the minimizing problem for the following fuctional:
\begin{equation}\label{nonlocalFunc2}
	\mathcal{F}_{(K,\,A)}(E):=P_{K}(E)+V_1(E)-A\,R(E)
\end{equation}
under the volume constraint $|E|=m$ where $E\subset\mathbb{R}^N$, $N\geq 2$, is measurable, and $A\geq0$. Here $P_K(E)$ is the nonlocal perimeter with a general kernel $K$ defined by
\begin{equation}\label{nonlocalPeri}
	P_K(E):= \displaystyle\int_{E}\int_{E^c} K(x-y)\,dx\,dy,
\end{equation}
where $E^c:= \mathbb{R}^N\setminus E$ and the kernel $K$ satisfies the following conditions:
\begin{itemize}
	\item[(K1)] $K(x)\geq 0$, and $K(-x)=K(x)$ for any $x\in\mathbb{R}^N$.
	\item[(K2)] $K\in L^1(B^c_1(0))$.
	\item[(K3)] $K$ is a locally Lipschitz function in $\mathbb{R}^{N}\setminus\{0\}$.
\end{itemize}
Moreover, in our study, we further impose either of the following assumptions: 
\begin{itemize}
	\item[(K4)] there exists constants $0<s<1$ and $\varepsilon>2^{\frac{1}{N+s-1}}-1>0$ such that, for any $0<|x|<1+\varepsilon$, $K(x)\leq |x|^{-(N+s)}$ and, for any $|x|\geq 1+\varepsilon$, $|x|K(x) \leq (1+\varepsilon)^{-(N+s-1)}$.
	\item [(K4)'] there exists constants $0<s<1$, $\varepsilon>2^{\frac{1}{N+s-1}}-1>0$, and $\lambda>1$ such that $|x|^{-(N+s)}\leq K(x)\leq \lambda|x|^{-(N+s)}$ for any $0<|x|<1+\varepsilon$ and $|x|^{-(N+s)}\leq K(x)\leq (1+\varepsilon)^{-(N+s)}$ for any $|x|\geq 1+\varepsilon$.
\end{itemize}
For instance, $K(x):= |x|^{-(N+s)}$ is one of the examples satisfying all of the above conditions on $K$. From the above conditions, we can roughly illustrate the shape of $K$ in Figure \ref{figure1} and \ref{figure2}.

Moreover, $V_1$ is the Riesz potential and is defined by
\begin{equation}\label{coulombicPotential}
	V_1(E):=\frac{1}{2}\int_{E}\int_{E} \frac{1}{|x-y|}\,dx\,dy,
\end{equation}
for any measurable $E\subset\mathbb{R}^{N}$. Specifically, in the case of $N=3$, $V_1$ is said to be the Coulombic repulsive potential. Finally, $R(E)$ is defined by
\begin{equation}\label{thirdPotential}
	R(E):=\int_{E}\frac{1}{|x|}\,dx,
\end{equation}
for any measureble $E\subset\mathbb{R}^N$. 

\begin{figure}[H]
	\begin{center}
		\begin{tabular}{c}
			\begin{minipage}{0.50\hsize}
				\includegraphics[keepaspectratio,scale=0.54]{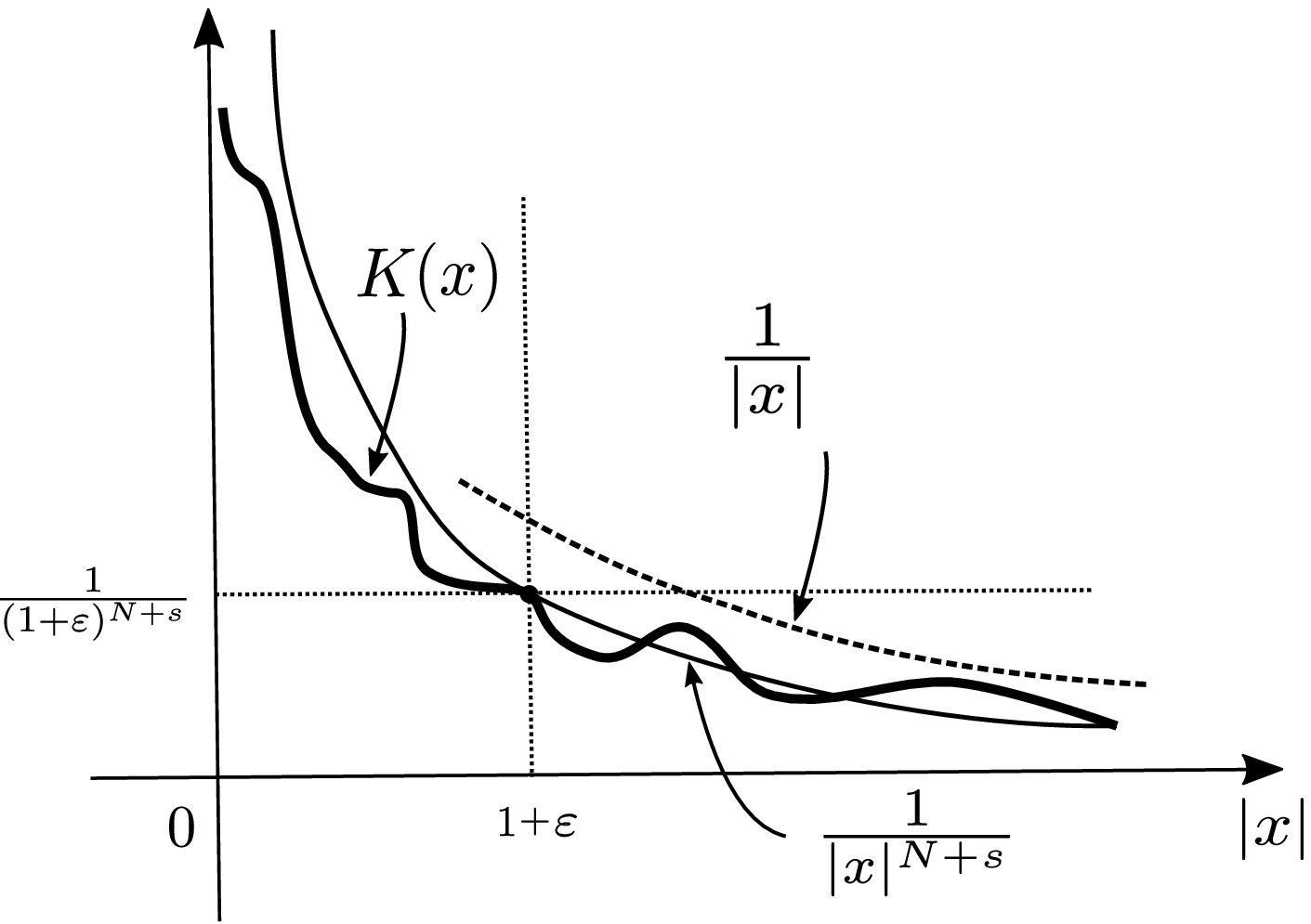}
				\caption{Graph of $K$ with (K4)}
				\label{figure1}
			\end{minipage}
		
			\begin{minipage}{0.50\hsize}
				\includegraphics[keepaspectratio,scale=0.54]{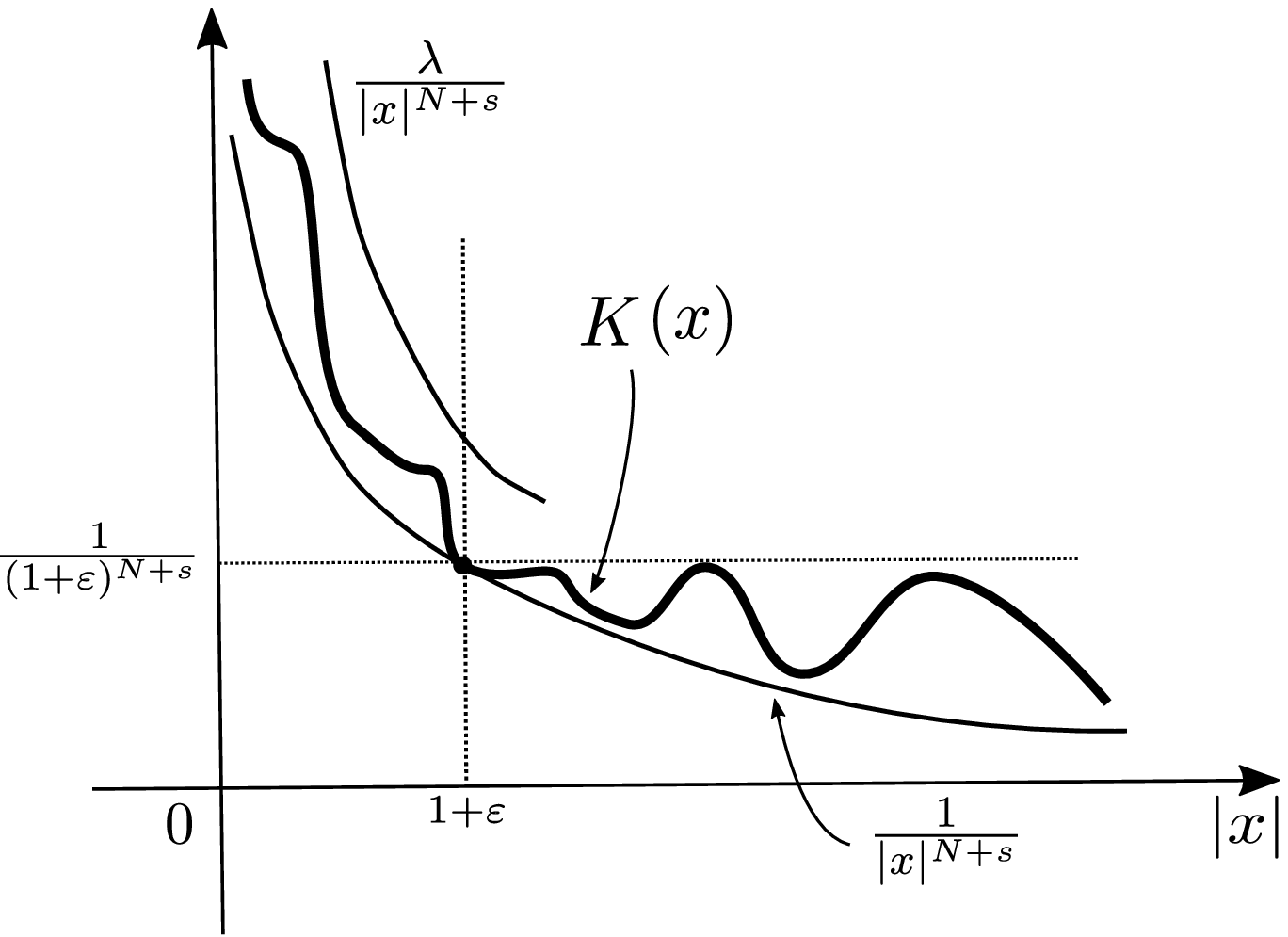}
				\caption{Graph of $K$ with (K4)'}
				\label{figure2}
			\end{minipage}
		\end{tabular}
	\end{center}
\end{figure}

The study of the variational problem of \eqref{nonlocalFunc2} is one extension of a series of the previous works \cite{julin, knupferMuratov, lamanna2, luOtto}, in which they treated the classical perimeter instead of the nonlocal one. Especially, when $N=3$, $\alpha=2$, and $A\equiv0$, it is important to consider this minimizing problem in physics. It is known as the liquid drop model, introduced by Gamow \cite{gamow} to model the stability of atomic nuclei and nuclear fission. In this paper, we consider the nonexistence of the minimizers for \eqref{nonlocalFunc2} all over the measurable sets in $\mathbb{R}^N$ with the volume constraint. Note that, considering the general Riesz potential $V_{\alpha}$, instead of $V_1$, which is defined by
\begin{equation}\label{3dimRieszPo}
	V_{\alpha}(E):=\frac{1}{2}\int_{E}\int_{E}\frac{1}{|x-y|^{\alpha}}\,dx\,dy
\end{equation}
for any $E\subset\mathbb{R}^N$ and some $\alpha\in(0,\,N)$, we are not able to show the nonexistence of minimizers so far. This is because we do not overcome the technical difficulties in estimating $V_{\alpha}$ to obtain the claim (see the proof of Theorem \ref{mainTheorem} for details).

If we consider each term in \eqref{nonlocalFunc2} separately and we choose a function $|x|^{-(N+s)}$ as the kernel $K$, then it is known that, by the isoperimetric inequality for the nolocal perimeter, a ball is the only minimizer for $P_K$ all over the sets with finite volume and, by Riesz rearrangement inequality, a ball is the only maximizer for both $V_{\alpha}$ and $R$ under the volume constraint. Thus, it is not trivial whether the minimizing problem of \eqref{nonlocalFunc2} has a solution under the volume constraint or not. Moreover, we may not expect the existence of a minimizer for \eqref{nonlocalFunc2} if $m$ is large. Indeed, considering the rescaled set $\lambda\,E$, we observe
\begin{equation}\label{rescaledFunc}
	\mathcal{F}_{(K,\,A)}(\lambda\,E)= \lambda^{N-s}\,P_K(E)+ \lambda^{2N-1}\,V_1(E)- \lambda^{N-1}A\,R(E).
\end{equation}
Thus, when $\lambda$ is large, the dominating term is $\lambda^{2N-1}\,V_1(E)$, however $V_1$ does not admit minimizers. Note that, if $\lambda$ is sufficiently small, then the dominating term is $\lambda^{N-s}\,P_K(E)$ because of $s<1$. Especially, in the case that $K(x)=|x|^{-(N+s)}$, it is known that $P_K$ admits a minimizer and it is actually a ball. This implies that, if $\lambda$ is small, then there may exist a minimizer for \eqref{nonlocalFunc2} and it should be a ball in $\mathbb{R}^N$.

Now let us briefly review the previous works concerned with our problem. In the case of the classical perimeter and the general Riesz potential $V_{\alpha}$ instead of $P_K$ and $V_1$ in \eqref{nonlocalFunc2}, Kn\"upfer and Muratov proved a series of the results stated in \cite{knupferMuratov0, knupferMuratov} as follows; if $N=2$, $\alpha\in(0,\,2)$, and $A\equiv 0$, balls are the only minimizer under the volume constraint $|E|=m$ for sufficiently small $m>0$. In addition, for sufficiently large $m>0$, there are no minimizers. Finally, in higher dimensions, if $3\leq N \leq 7$ and $\alpha\in(0,\,N-1)$, then the balls are the only minimizers for sufficiently small $m>0$. In addition, Bonacini and Cristoferi extended some of the above results to the case $N\geq 3$ in \cite{BC}. Especially, regarding to the nonexistence of minimizers, not only Kn\"upfer and Muratov but also Lu and Otto showed in \cite{luOtto} the following result; if $N=3$ and $A\not\equiv 0$, then there exists a number $m_0>0$, which is explictly obtained, that for any $m\geq m_0$, the minimizing problem $\min\{P(E)+V_1(E)-A\,R(E)\mid |E|=m\}$ has no solution. In the case of the nonlocal perimeter, Figalli et al. in \cite{FFMMM} considered the isoperimetric problems with $K(x)=|x|^{-(N+s)}$ in the presence of the Riesz potential if $N\geq 2$. Precisely, they showed that, if $m$ is positive number less than some number $m_0$, then the problems 
\begin{equation}\label{FFMMMpaper}
	\inf\left\{\frac{1-s}{\omega_{N-1}}\,P_s(E)+V_{N-\alpha}(E)\mid |E|=m\right\},\quad P_s(E):=\int_{E}\int_{E^c}\frac{1}{|x-y|^{N+s}}\,dx\,dy,
\end{equation}
where $\omega_{N-1}$ is the volume of the unit sphere $\mathbb{S}^{N-1}$, admit balls of volume $m$ as their (unique up to translation) minimizers for any $s\in(s_0,\,1)$ and $\alpha\in(\alpha_0,\,N)$ with some uniform lower bounds $s_0,\,\alpha_0>0$.

In this paper, we established the general results of the nonexistence of the minimizers in the case of nonlocal perimeter. Precisely, we obtain the following two results; the first one is that, if we assume that $K$ satisfies (K1), (K2), (K3), and (K4)' and assume $A\geq 0$ and $m>0$, then every minimizer $E$ of $\mathcal{F}_{(K,\,A)}$ is bounded. The second one is that, if we assume that $K$ satisfies (K1), (K2), and (K4) and $m>m_0(N,\,s,\,\varepsilon),\,A$, then $\min\{\mathcal{F}_{(K,\,A)}(E)\mid |E|=m\}$ has no bounded solutions. Accordingly, if we assume that the conditions (K1), (K2), (K3), and (K4)' are valid, then we may have that $\min\{\mathcal{F}_{(K,\,A)}(E)\mid |E|=m\}$ has no solutions. We emphsize that our result is one complementing result shown in \cite{FFMMM}. Indeed, if we set $A\equiv0$ and $K(x)= |x|^{-(N+s)}$, then our result gives us the solution to the nonexistence problem for the functional $P_s(E)+V_1(E)$ under the volume constraint $|E|=m$, while the authors in \cite{FFMMM} considered the existence of minimizers for the functional $P_s(E)+V_{\alpha}(E)$ where $\alpha\in(\alpha_0,\,N)$ for some $\alpha_0>0$ under the volume constraint.

Our idea for proving the boundedness of minimizers is based on \cite{CN}. First, we will show that the energy \eqref{nonlocalFunc2} is stable under some sufficiently small peturbation near a point in the measure-theoretic boundary of a minimizer. This stability is what we call ``Almgren's lemma". Secondly, if we suppose that a minimizer is not bounded, this stability and the minimality make it possible that one can derive an integral inequality of the volume of a minimizer outside of some ball, which leads to a contradiction. On the other hand, the idea for proving the main theorem, i.e., the nonexistence of minimizers is based on \cite{julin, luOtto2, FKN, lamanna1}. First of all, we recall that, if $K(x)=|x|^{-(N+s)}$, then Kn\"upfer and Muratov showed in \cite{knupferMuratov0, knupferMuratov} the nonexistence of minimizers by the following idea; first, they showed that the minimizers with large volumes must be long and thin; otherwise the Riesz potential can gain the energy. Thus, they cut the minimizers for large masses into two parts by some hyperplane. Then, they moved the two parts far away from each other to reduce the energy. As a result, they can compared the energy of the sum of the two sets with that of the original configuration, to find that it can be represented as a differential inequality for the volumes of a cross-section of the minimizer by different hyperplanes. In our case, however, due to the last term of \eqref{nonlocalFunc2}, even if we cut the minimizers into two parts and move them far away from each other, there is a possibility that one may decrease the total energy. Thus, the idea in \cite{knupferMuratov0, knupferMuratov} might not work in our case. Instead, we will take the following strategy; first, we separate $\mathbb{R}^N$ into two parts by a hyperplane which is parametrized by a directional parameter $\nu\in\mathbb{S}^{N-1}$ and a tranlating parameter $l\in\mathbb{R}$. Then, taking any minimzer of \eqref{nonlocalFunc2} and considering the intersection of the minimizer by each separated part (either of them can be an empty set), we compare the sum of the energy for each intersetion with that of the original set. Integrating the resulitng inequality with respect to $l$ and then $\nu$, we can obtain the inequality of the mass of a minimizer to find that it actually shows the upper bound of the mass.

This paper is divided into four sections. In Section \ref{statementResults}, we state the two results, one is on the boundedness of minimizers and the other is on the nonexitence of minimizers. In Section \ref{boundednessMinimizer}, we first show the boundedness of minimziers under the assumptions (K1), (K2), (K3), and (K4)' on the kernel $K(x)$. In Section \ref{nonexisMinimizer}, we next prove the nonexistence of minimizers. Before proving the nonexitence, we first show the weak subadditivity of the infimum of the energy \eqref{nonlocalFunc2} with respect to the mass $m>0$. Finally, in Section \ref{appendix}, we state one possible generalization of the energy and the proof of the nonexistence of minimizers for it. This is obtained by a simple modification of the proof done in Section \ref{nonexisMinimizer}.

\section{Statement of results}\label{statementResults}
Our main results in this paper is as follows; the first one is on the boundedness of the minimizer and the second one is on the nonexistence of the minimizers.
\begin{lemma}\label{mainLemma}
	Suppose that $N\geq 2$ and $K$ satisfies (K1), (K2), (K3), and (K4)' and let $A\geq 0$ and $m\in(0,\,\infty)$. Then, for every minimizer  $E$ of $\mathcal{F}_{(K,\,A)}$, $E$ is bounded.
\end{lemma}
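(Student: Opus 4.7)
\emph{Strategy.} The plan, inspired by \cite{CN}, is a two-step argument: first establish an ``Almgren-type lemma'' giving a linear bound on the energy cost of small local volume adjustments near the measure-theoretic boundary of $E$, then exploit this bound in a truncation-and-compensation argument against the supposed unbounded minimizer. For the Almgren step, I would fix a point $x_0 \in \partial^* E$ and a small ball $B_{r_0}(x_0)$, and deform $E$ inside that ball by the flow of a fixed smooth vector field. Using (K3) to differentiate $P_K$, together with the local integrability of $1/|x-y|$ and $1/|x|$ off the singularity, this produces a one-parameter family $\{F_\sigma\}_{|\sigma|\le\sigma_0}$ with $|F_\sigma| = |E| + \sigma$, $F_\sigma \triangle E \Subset B_{r_0}(x_0)$, and $|\mathcal{F}_{(K,A)}(F_\sigma) - \mathcal{F}_{(K,A)}(E)| \leq C|\sigma|$.

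\emph{Truncation-compensation.} Suppose for contradiction that $E$ is unbounded, so $m(R) := |E \setminus B_R(0)|$ is positive for every $R > 0$. I would choose $R_1$ so large that $B_{r_0}(x_0) \subset B_{R_1/2}(0)$ and $m(R_1) < \sigma_0$, and for $R > R_1$ define the competitor $F_R := (E \cap B_R) \cup G_R$, where $G_R$ is obtained from Almgren's lemma with $\sigma = m(R)$ so that $|F_R| = m$. By minimality $\mathcal{F}_{(K,A)}(E) \leq \mathcal{F}_{(K,A)}(F_R)$, and the Almgren correction contributes at most $C\,m(R)$. Using the identity
\begin{equation*}
P_K(E) - P_K(E \cap B_R) \;=\; P_K(E \setminus B_R) \;-\; 2\int_{E \cap B_R}\int_{E \setminus B_R} K(x-y)\,dx\,dy,
\end{equation*}
the monotonicity $V_1(E \cap B_R) \leq V_1(E)$, the crude bound $A\,(R(E) - R(E \cap B_R)) \leq A\,m(R)/R$, a fractional isoperimetric-type lower bound on $P_K(E \setminus B_R)$ provided by (K4)', and the coarea identity $-m'(R) = \mathcal{H}^{N-1}(E \cap \partial B_R)$ for a.e.\ $R$, I would arrive at an integro-differential inequality of the form
\begin{equation*}
\phi(m(R)) \;\leq\; C\bigl(-m'(R)\bigr) \;+\; C\,m(R)/R,
\end{equation*}
with $\phi$ increasing and vanishing only at $0$. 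Integration in $R$ then forces $m(R) = 0$ beyond some radius, contradicting the assumption.

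\emph{Main obstacle.} The most delicate step will be controlling the cross-interaction $\int_{E\cap B_R}\int_{E \setminus B_R} K(x-y)\,dx\,dy$: one must show it is of lower order than $P_K(E\setminus B_R)$ for large $R$, which is where the lower bound $K(x)\geq |x|^{-(N+s)}$ in (K4)' (absent in (K4)) becomes essential through the fractional isoperimetric inequality. Once this is in hand, the Riesz term is friendly because truncation strictly lowers $V_1$, and the background term $-A\,R$ is only a lower-order perturbation since the removed mass sits where $1/|x|\leq 1/R$.
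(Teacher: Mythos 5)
Your overall architecture matches the paper's: an Almgren-type lemma controlling the energy cost of small local volume corrections, a fractional isoperimetric lower bound on $P_K(E\setminus B_R)$ coming from the lower bound in (K4)', and a truncation--compensation competitor leading to an integral inequality for $f(r)=|E\setminus B_r|$. You also correctly identify the cross-interaction $\int_{E\cap B_R}\int_{E\setminus B_R}K(x-y)\,dx\,dy$ as the delicate term, and correctly note that the background term is benign because the removed mass lives where $1/|x|\le 1/R$.

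However, there is a genuine gap in the \emph{form} of the inequality you expect and in how you propose to close the argument. You write that the estimates lead to a local ODE
\[
\phi(m(R))\;\le\;C\bigl(-m'(R)\bigr)+C\,m(R)/R,
\]
and that integration in $R$ then finishes. That shape of inequality is what one gets for the \emph{classical} perimeter (where the cross-term across $\partial B_R$ is exactly a surface integral over the slice and hence controlled by $-m'(R)=\mathcal H^{N-1}(E\cap\partial B_R)$). For the nonlocal perimeter the cross-term is not controlled by the slice at radius $R$ alone: with $|x-y|\ge|y|-R$ and (K4)', the sharp bound obtained via coarea is of the fractional, genuinely nonlocal form
\[
\int_{E\cap B_r}\int_{E\setminus B_r}K(x-y)\,dx\,dy\;\lesssim\;\int_r^\infty\frac{-f'(t)}{(t-r)^s}\,dt\;+\;C\,f(r),
\]
i.e.\ a weighted tail integral of $-f'$ over all radii $t>r$, not $-f'(r)$ itself. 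Feeding this into the isoperimetric lower bound yields, instead of a local ODE, an inequality of the type
\[
f(r)^{\frac{N-s}{N}}\;\lesssim\;f(r)\;+\;\int_r^\infty\frac{-f'(t)}{(t-r)^s}\,dt,
\]
and the contradiction is no longer a one-line ODE integration: one must integrate this in $r$ over $(R,\infty)$, Fubini-swap to turn the double integral of $-f'(t)(t-r)^{-s}$ into $\frac{1}{1-s}\int_R^\infty(-f'(t))(t-R)^{1-s}\,dt$, estimate that by $f(R)+\int_R^\infty f$, absorb the linear tail $\int_R^\infty f$ into the left-hand side using $f\to0$, and then run a dyadic iteration on $w_k:=f(R+1-2^{-k})$ to force $f(R+1)=0$. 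Without this additional integration/Fubini/iteration step, the argument does not close; the local ODE you posit simply does not follow from the available estimates when $K$ has a fat tail near the diagonal. Everything else in your proposal (Almgren's lemma, $V_1$ monotonicity under truncation, the $A\,m(R)/R$ bound, the role of (K4)' for the isoperimetric lower bound) is consistent with the paper's proof.
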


\begin{theorem}\label{mainTheorem}
	Suppose that $N\geq 2$ and $K$ satisfies (K1), (K2), and (K4) and let $A\geq 0$. Then, there exists a critical mass $m_c>0$ given by
	\begin{equation}\label{volumeCondition}
		m_c:=\left(\frac{1}{2}-\frac{1}{(1+\varepsilon)^{N+s-1}}\right)^{-1} \left(\frac{\omega_{N-1}}{1-s}\,(1+\varepsilon)^{1-s} + A\right)
	\end{equation}
	such that, for any $m>m_c$, 
	\begin{equation}\label{minimizerFunctional}
		\min\{\mathcal{F}_{(K,\,A)}(E)\mid E\subset\mathbb{R}^N \text{measureble},\,|E|=m\}
	\end{equation}
	has no bounded solutions.
	
	In addition, if we impose the conditions (K3) and (K4)' instead of (K4) on $K$, then, for any $m>m_c$ where $m_c$ is the same critical mass as \eqref{volumeCondition}, \eqref{minimizerFunctional} has no solutions.
\end{theorem}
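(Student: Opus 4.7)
Suppose for contradiction that $E$ is a bounded minimizer of $\mathcal{F}_{(K,A)}$ with $|E|=m>m_c$. The plan is to slice $E$ by every affine hyperplane, derive a pointwise inequality from the cut, and then integrate over the space of all hyperplanes to produce an upper bound on $m$ that contradicts $m>m_c$. For each $(\nu,l)\in\mathbb{S}^{N-1}\times\mathbb{R}$ put $E^{\pm}_{\nu,l}:=E\cap\{x\in\mathbb{R}^N:\pm(x\cdot\nu-l)>0\}$. The disjoint-union identities
\[
P_K(A\sqcup B)=P_K(A)+P_K(B)-2\!\int_A\!\!\int_B\!K(x-y)\,dx\,dy,\qquad V_1(A\sqcup B)=V_1(A)+V_1(B)+\!\int_A\!\!\int_B\!\frac{dx\,dy}{|x-y|},
\]
combined with the additivity of $R$, yield the exact decomposition
\[
\mathcal{F}_{(K,A)}(E^+_{\nu,l})+\mathcal{F}_{(K,A)}(E^-_{\nu,l})-\mathcal{F}_{(K,A)}(E)=2\!\int_{E^+_{\nu,l}}\!\!\int_{E^-_{\nu,l}}\!K(x-y)\,dx\,dy-\int_{E^+_{\nu,l}}\!\!\int_{E^-_{\nu,l}}\!\frac{dx\,dy}{|x-y|}.
\]

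Combining this identity with the weak subadditivity of $f(m):=\inf\{\mathcal{F}_{(K,A)}(F):|F|=m\}$ (established earlier in Section \ref{nonexisMinimizer}) and with the trivial lower bounds $\mathcal{F}_{(K,A)}(E^{\pm}_{\nu,l})\geq f(|E^{\pm}_{\nu,l}|)$, I obtain, for each $(\nu,l)$, a one-sided inequality of the schematic form
\[
\int_{E^+_{\nu,l}}\!\int_{E^-_{\nu,l}}\frac{dx\,dy}{|x-y|}\leq 2\!\int_{E^+_{\nu,l}}\!\int_{E^-_{\nu,l}}K(x-y)\,dx\,dy+A\,\mathrm{Err}(\nu,l),
\]
where $\mathrm{Err}(\nu,l)$ is a non-negative correction inherited from the background potential (it encodes the loss of Riesz attraction when mass is placed far from the origin). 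I then integrate in $l\in\mathbb{R}$ via Fubini's identity $\int_\mathbb{R}\mathbf{1}_{\{y\cdot\nu<l<x\cdot\nu\}}\,dl=((x-y)\cdot\nu)_+$, and in $\nu\in\mathbb{S}^{N-1}$ via the rotational identity $\int_{\mathbb{S}^{N-1}}((x-y)\cdot\nu)_+\,d\mathcal{H}^{N-1}(\nu)=\tfrac{\omega_{N-2}}{N-1}|x-y|$. The LHS then becomes $\tfrac{\omega_{N-2}}{N-1}m^2$, the kernel term becomes $\tfrac{2\omega_{N-2}}{N-1}\int_E\!\int_E K(x-y)|x-y|\,dx\,dy$, and the $A$-correction (handled carefully) contributes a constant multiple of $Am$.

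The weighted kernel integral is then controlled by (K4): on $\{|z|<1+\varepsilon\}$ one has $K(z)|z|\leq|z|^{-(N+s-1)}$, and on $\{|z|\geq 1+\varepsilon\}$ one has $K(z)|z|\leq(1+\varepsilon)^{-(N+s-1)}$, so
\[
\int_E\!\int_E K(x-y)|x-y|\,dx\,dy\leq\frac{m\,\omega_{N-1}(1+\varepsilon)^{1-s}}{1-s}+m^2(1+\varepsilon)^{-(N+s-1)}.
\]
Putting everything together and dividing by the appropriate multiple of $m$ yields
\[
m\Bigl(\tfrac{1}{2}-(1+\varepsilon)^{-(N+s-1)}\Bigr)\leq\frac{\omega_{N-1}(1+\varepsilon)^{1-s}}{1-s}+A,
\]
i.e.\ $m\leq m_c$, contradicting $m>m_c$. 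The additional claim under (K3) and (K4)' follows from Lemma \ref{mainLemma}, which ensures that any putative minimizer is automatically bounded, so the first part rules out all minimizers.

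The hard part, I expect, is controlling $\mathrm{Err}(\nu,l)$ after integration. A naive estimate such as $\int_{E^{\pm}_{\nu,l}}|x|^{-1}\,dx\leq R(E)\lesssim m^{(N-1)/N}$ is too weak because it carries no decay in $l$ and would integrate against an unbounded diameter factor of $E$. The subadditivity must therefore be stated so that the correction decays in $|l|$, exploiting the fact that one of $E^{\pm}_{\nu,l}$ always lies on the far side of the origin in direction $\nu$, whence $R$ on that side is dominated by $|E^{\pm}_{\nu,l}|/|l|$; this is what ultimately yields the clean $O(Am)$ contribution matching the $A$-term in \eqref{volumeCondition}.
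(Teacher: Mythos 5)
Your overall plan coincides with the paper's: slice $E$ by every affine hyperplane $\{x\cdot\nu=l\}$, combine the disjoint-union identities for $P_K$ and $V_1$ with weak subadditivity of the infimum, integrate the resulting pointwise inequality over $l\in\mathbb{R}$ and then over $\nu\in\mathbb{S}^{N-1}$, use (K4) to control the weighted kernel term, and invoke Lemma \ref{mainLemma} for the unbounded case. The Fubini identity and the rotational identity you quote are exactly the ones used (and your constant $\omega_{N-2}/(N-1)$ is in fact the correct one; it cancels on both sides regardless). However, two points where you have a genuine gap, both tied to the background potential.

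First, the subadditivity you invoke must be the \emph{asymmetric} one, as in Lemma \ref{weakSubadditive}:
\[
F_{(K,A)}[m_1+m_2]\leq F_{(K,A)}[m_1]+F_{(K,0)}[m_2],
\]
where one summand drops the background term. A symmetric bound $F_{(K,A)}[m_1+m_2]\leq F_{(K,A)}[m_1]+F_{(K,A)}[m_2]$ does \emph{not} follow from the usual "translate one piece to infinity" argument, because $R(E_2+d\,e_1)\to 0$ as $d\to\infty$, so the translated piece \emph{loses} its (favourable, $-A R$) background energy and the resulting bound goes the wrong way. It is precisely this asymmetry that produces the one-sided error term $A\int_{E^\mp_{\nu,l}}|x|^{-1}\,dx$ in \eqref{esti3}, with the $E^\pm$ roles swapped according to the sign of $l$; if you use the symmetric inequality the $A$-term disappears from \eqref{esti3} entirely and you would prove a (false) $A$-independent bound on $m$.

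Second, your proposed mechanism for taming the error term does not work as stated. You suggest $\int_{E^{\pm}_{\nu,l}}|x|^{-1}\,dx\lesssim |E^{\pm}_{\nu,l}|/|l|$ using that the far piece lies at distance $\geq|l|$ from the origin; but $\int_0^{\infty}|E^{+}_{\nu,l}|\,l^{-1}\,dl$ diverges near $l=0$ whenever $|E^{+}_{\nu,0}|>0$, so this estimate is not summable in $l$. The paper avoids any decay estimate by computing the $l$-integral \emph{exactly} via the layer-cake formula, as in \eqref{esti4}:
\[
\int_{-\infty}^{0}\int_{E^{-}_{\nu,l}}\frac{dx}{|x|}\,dl=\int_{E^{-}_{\nu,0}}\frac{|x\cdot\nu|}{|x|}\,dx,
\]
and symmetrically for $l>0$; summing gives $\int_{E}\frac{|x\cdot\nu|}{|x|}\,dx$, whose $\nu$-average is exactly a dimensional constant times $|E|=m$, since $|x\cdot\nu|/|x|$ is bounded by $1$ and averages to the same constant as in the rotational identity. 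So the $O(Am)$ contribution comes from an identity, not from a decay-in-$l$ argument; with this correction your proof closes and reproduces \eqref{volumeCondition}.
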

The proof is based on \cite{FKN, lamanna1}, however, we need to pay more attention on the calculation of the nonlocal perimeter in order to obtain the explicit lower bound of the mass.

\section{Boundedness of minimizers}\label{boundednessMinimizer}
In this section, we prove Lemma \eqref{mainLemma}, i.e., the boundedness of minimizers for the functional \eqref{nonlocalFunc2} under some conditions on a kernel $K$. First of all, we start to show a generalized version of so-called Almgren's lemma (see \cite{frank} for the classical results, \cite{FM} in the case of an anisotropic perimeter, or \cite{CN} in the case of a nonlocal s-perimeter).
\begin{lemma}\label{almgrenLemma}
	Suppose that $K$ satisfies (K1), (K2), and (K3). Let $E\subset\mathbb{R}^N$ be a measurable set with $P_{K}(E)+V_1(E)+R(E)<\infty$. Let $x_0\in\mathbb{R}^N$ and $r_0>0$ be such that
	\begin{equation}\label{densityassumptions}
	|B_{r_0}(x_0)\cap E| >0, \quad |B_{r_0}(x_0)\cap E^c| >0.
	\end{equation}
	Then, there exist $k_0,\,C>0$ such that, for any $k\in(-k_0,\,k_0)$, there exists a measurable set $F_k\subset\mathbb{R}^N$ such that $P_{K}(F_k)+V_1(F_k)+R(F_k)<\infty$ and the following properties hold:
	\begin{enumerate}
		\item $E\varDelta F_k \Subset B_{r_0}(x_0)$
		\item $|F_k| - |E| = k$
		\item $|\mathcal{F}_{(K,\,A)}(F_k)-\mathcal{F}_{(K,\,A)}(E)| < C\,k$.
	\end{enumerate}
\end{lemma}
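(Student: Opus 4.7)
The plan is to construct $F_k$ as the image of $E$ under a smooth one-parameter flow $\Phi_t$ with compact support in $B_{r_0}(x_0)$, choose the flow time $t=t(k)$ via the implicit function theorem so that the prescribed volume change is realized, and then bound each term of $\mathcal{F}_{(K,\,A)}$ linearly in $|t|$.

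First I would establish the existence of a vector field $X\in C_c^\infty(B_{r_0}(x_0)\setminus\{0\};\mathbb{R}^N)$ with $V'(0):=\int_E\operatorname{div}X\,dz\neq 0$: if no such $X$ existed, then $\nabla\chi_E$ would vanish distributionally on the connected open set $B_{r_0}(x_0)\setminus\{0\}$ (which is connected for $N\geq 2$), forcing $\chi_E$ to be almost-everywhere constant there and contradicting \eqref{densityassumptions}. The requirement $0\notin\operatorname{supp}X$ is harmless, since one can always localize the support near a chosen density point of $E$ (or of $E^c$) lying in $B_{r_0}(x_0)\setminus\{0\}$, whose existence is immediate from the Lebesgue density theorem. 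Let $\Phi_t$ denote the global flow of $X$; then $V(t):=|\Phi_t(E)|$ is of class $C^1$ near $t=0$ with $V'(0)\neq 0$, and the implicit function theorem yields $k_0,C>0$ and a $C^1$ map $k\mapsto t(k)$ on $(-k_0,k_0)$ with $|t(k)|\leq C|k|$ and $V(t(k))=|E|+k$. Setting $F_k:=\Phi_{t(k)}(E)$, properties (1) and (2) follow immediately from $\operatorname{supp}X\Subset B_{r_0}(x_0)$ and the choice of $t(k)$.

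For property (3), since $|t(k)|\leq C|k|$ it suffices to establish $|\mathcal{F}_{(K,\,A)}(\Phi_t(E))-\mathcal{F}_{(K,\,A)}(E)|\leq C|t|$ for $|t|$ small. After changing variables $x=\Phi_t(x')$, $y=\Phi_t(y')$ in each integral, the near-identity estimates $|\Phi_t(z)-z|\leq C|t|$, $\bigl||\det D\Phi_t|-1\bigr|\leq C|t|$, and $(1-C|t|)|x-y|\leq|\Phi_t(x)-\Phi_t(y)|\leq(1+C|t|)|x-y|$ reduce each difference to an integral of pointwise kernel corrections. For $R(\Phi_t(E))-R(E)$ and $V_1(\Phi_t(E))-V_1(E)$, local integrability of $|x|^{-1}$ on compacts of $\mathbb{R}^N$ (valid for $N\geq 2$; our choice $0\notin\operatorname{supp}X$ removes any further issue) and of $|x-y|^{-1}$ then yields the bound $|R(\Phi_t(E))-R(E)|+|V_1(\Phi_t(E))-V_1(E)|\leq C|t|$ directly.

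The main obstacle is the perimeter term, for which after the same change of variables
\begin{equation*}
P_K(\Phi_t(E))-P_K(E)=\int_E\!\!\int_{E^c}\bigl[K(\Phi_t(x)-\Phi_t(y))|\det D\Phi_t(x)||\det D\Phi_t(y)|-K(x-y)\bigr]\,dx\,dy,
\end{equation*}
with $K$ possibly blowing up along the diagonal. My plan is to split the integration into a far region $\{|x-y|\geq\delta\}$ and a near region $\{|x-y|<\delta\}$ for a small fixed $\delta>0$. On the far region, (K3) supplies a uniform Lipschitz constant for $K$ on $\{|z|\geq\delta/2\}$ and (K2) handles the tail in $|x-y|$, so the contribution is $O(|t|)$. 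On the near region, the bi-Lipschitz bound $|\Phi_t(x)-\Phi_t(y)-(x-y)|\leq C|t|\,|x-y|$ keeps the straight segment from $x-y$ to $\Phi_t(x)-\Phi_t(y)$ in the thin annulus $\{(1-C|t|)|x-y|\leq|z|\leq(1+C|t|)|x-y|\}\subset\mathbb{R}^N\setminus\{0\}$, where (K3) applies; combining this with the Jacobian corrections produces a pointwise estimate of the form $C|t|\,K(x-y)$, which integrates against the finite quantity $P_K(E)$ to give the desired $O(|t|)$ bound. The subtle point—and the sole place where (K3) is truly essential—is the control of the kernel variation in this thin annulus near the diagonal.
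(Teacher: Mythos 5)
Your plan coincides in strategy with the paper's own proof, which uses the one-parameter maps $\Psi_t(x)=x+tT(x)$ where you use the ODE flow of a vector field $X$; this is an inessential difference, as both produce a $C^1$ family of diffeomorphisms supported in $B_{r_0}(x_0)$ whose volume derivative at $t=0$ is $\int_E\operatorname{div}T\,dx$. Your insistence that $0\notin\operatorname{supp}X$ is a genuine refinement: the paper's intermediate inequality $(1-c_0(T)|t|)|x|\leq|x+tT(x)|\leq(1+c_0(T)|t|)|x|$ cannot hold uniformly near $x=0$ when $T(0)\neq0$, and although the paper's conclusion $|R(\Psi_t(E))-R(E)|\leq C|t|$ can still be rescued for $N\geq2$ by local integrability of $|x|^{-1}$, your choice of support makes that step line-by-line correct. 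The near-diagonal perimeter estimate, which you rightly flag as the sole delicate step, is also where the paper's proof is most informal: to produce the multiplicative bound $|K(z+h)-K(z)|\leq C|t|\,K(z)$ for $|h|\leq C|t|\,|z|$ one needs a scale-invariant Lipschitz estimate, namely that the local Lipschitz constant of $K$ at scale $|z|$ is at most a constant times $K(z)/|z|$; this holds for $K(z)=|z|^{-(N+s)}$ but is not formally a consequence of (K3). The paper's written claim of a uniform additive constant $C_0$ with $K(z+h)\leq K(z)+C_0|h|$ near the diagonal is in fact false even for the power kernel, so your near/far splitting is at least as careful as the original argument and more transparent about where the true hypothesis on the kernel lies.
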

\begin{proof}\label{pfAlmgrenLem}
	From the density assumption, we have that there exists a function $T\in C^{1}_c(B_{r_0}(x_0);\,\mathbb{R}^N)$ such that
	\begin{equation}\label{pfAlmgren1}
	M := \int_{E} \divergence T(x)\,dx > 0.
	\end{equation}
	Otherwise, by the definition of the classical perimeter, it holds that $P(E,\,B_{r_0}(x_0)) = 0$. However, by the classical isoperimetric inequality, we have $|E \cap  B_{r_0}(x_0)| =0$, which contradicts the assumption of $E$.
	
	For any $t\in(-1,\,1)$, we define the maps $\Psi_t(x):= x + t\,T(x)$ for all $x\in\mathbb{R}^N$. Then, we may easily see that there exists $\delta_0\in(0,\,1)$ such that the maps $\Psi_t$ are diffeomorphisms from $\mathbb{R}^N$ onto itself for any $t\in(-\delta_0,\,\delta_0)$. Moreover, we have that $\det(\nabla\Psi_t(x)) = 1 + t\divergence T(x) + o(t)$ for any $t\in(-\delta_0,\,\delta_0)$. By the definition of $\Psi_t$, we also have $E\Delta\Psi_t(E)\subset\subset\mathbb{R}^N$ and thus, applying the change of variables,
	\begin{equation}\label{pfAlmgren2}
	|\Psi_t(E)| = \int_{E}|\det\nabla\Psi_t(x)|\,dx= \int_{E}\left(1 + t\divergence T(x) + o(t)\right)\,dx = |E| + tM + o(t),
	\end{equation}
	for sufficiently small $t\in(-\delta_0,\,\delta_0)$. Therefore, there exists a constant $k_0>0$ such that, if we set $F_k:= \Psi_{t(k)}(E)$, where $t(k):=k/M+o(k)$, for any $k\in(-k_0,\,k_0)$, $F_k$ satisfies the first and second properties in Lemma \ref{almgrenLemma}.
	
	Now, from the locally Lipschitz property of $K$ and the compactness of the support of $T$, we have that there exists a constant $C_0:=C_0(T)>0$ such that $K(x-y+t(T(x)-T(y)))\leq K(x-y) + C_0\,|t||T(x)-T(y)|$ for any $x,\,y\in B_{r_0}(x_0),\,x\neq y$ and any $0<t<1$. Then, we may compute $|\mathcal{F}_{(K,\,A)}(F_k)-\mathcal{F}_{(K,\,A)}(E)|$ as follows: first, we have
	\begin{align}\label{pfAlmgren2.1}
		&\Big|K(\Psi_t(x)-\Psi_t(y))\,|\det\nabla\Psi_t(x)|\,|\det\nabla\Psi_t(y)|-K(x-y)\Big| \nonumber\\
		&=\Big|K(x-y+t(T(x)-T(y)))\,\left(1+t(\divergence T(x)+\divergence T(y))+o(t)\right)-K(x-y)\Big|\nonumber\\
		&\leq |t|(2\|T\|_{C^1}+|o(1)|)\,K(x-y)+ C_0|t||T(x)-T(y)|(1+2|t|\|T\|_{C^1}+|o(t)|),
	\end{align}
	for any $x,\,y\in B_{r_0}(x_0)$ with $x\neq y$. Since the left-hand side of \eqref{pfAlmgren2.1} is equal to 0 if $x,\,y \notin B_{r_0}(x_0)$, it holds that
	\begin{align}\label{pfAlmgren3}
	&|P_{K}(\Psi_t(E))-P_{K}(E)|\nonumber\\
	&\leq \int_{E}\int_{E^c}\Big|K(\Psi_t(x)-\Psi_t(y))\,|\det\nabla\Psi_t(x)|\,|\det\nabla\Psi_t(y)|-K(x-y)\Big|\,dx\,dy \nonumber\\
	&\leq |t|\int_{E\cap B_{r_0}(x_0)}\int_{E^c\cap B_{r_0}(x_0)}\,\Big(C(T)\,K(x-y)+C'_0(T)\Big) \,dx\,dy \nonumber\\
	&\leq |t|\Big(C(T)P_{K}(E)+C'_0(T)|B_{r_0}(x_0)|^2\Big)<\infty,
	\end{align}
	where $C(T),\,C'_0(T)>0$ are constants independent of $0<t<1$. Secondly, we also have that
	\begin{align}\label{pfAlmgren4}
		|V_1(\Psi_t(E))-V_1(E)|&\leq \int_{E}\int_{E} \left|\frac{|\det\nabla\Psi_t(x)|\,|\det\nabla\Psi_t(y)|}{|\Psi_t(x)-\Psi_t(y)|}-\frac{1}{|x-y|}\right|\,dx\,dy \nonumber\\
		&= \int_{E}\int_{E} \left|\frac{1+t(\divergence T(x)+\divergence T(y))+o(t)}{|x-y+t(T(x)-T(y))|}-\frac{1}{|x-y|}\right|\,dx\,dy \nonumber\\
		&\leq \int_{E}\int_{E}\left(\frac{C_1(T)|t|}{|x-y|}+ \frac{o(t)}{|x-y|}\right)\,dx\,dy \nonumber\\
		&\leq |t|\,C_2(T)\,V_1(E) <\infty,
	\end{align}
	where $C_1(T),\,C_2(T)$ are positive constants independent of $t$. Finally, we estimate the difference of the potentials $R(\Psi_t(E))$ and $R(E)$. Since $t$ is sufficiently small, there exists a constant $c_0(T)>0$ such that $|x|(1-c_0(T)\,|t|) \leq |x+t\,T(x)| \leq |x|(1+c_0(T)\,|t|)$ and we have that $(1\pm c_0(T)\,|t|)=1\pm\,c_0(T)\,|t|+o(t)$. Hence, we obtain
	\begin{align}\label{pfAlmgren5}
		|R(\Psi_t(E))-R(E)|\leq \int_{E}\left|\frac{1+t\,\divergence T(x)+ o(t)}{|x+t\,T(x)|}-\frac{1}{|x|}\right|\,dx &\leq \int_{E} \left(\frac{C_3(T)|t|}{|x|}+\frac{o(t)}{|x|}\right)\,dx\nonumber\\
		&\leq |t|\,C_4(T)\,R(E)<\infty
	\end{align}
	where $C_3(T),\,C_4(T)$ are positive constants independent of $t$. Therefore, \eqref{pfAlmgren3}, \eqref{pfAlmgren4}, and \eqref{pfAlmgren5} imply that the set $F_k$ satisfies the third property in Lemma \ref{almgrenLemma} and this completes the proof.
\end{proof}

Before starting to prove Lemma \ref{mainLemma}, we need the following isoperimetric inequality of the nonlocal perimeter $P_K$ for sets with small volumes if $K$ satisfies the conditions (K1) and (K4)'.
\begin{lemma}\label{isoperiInequalityKernel}
	Suppose that $K$ satisfies (K1) and (K4)'. Then, there exists a constant $C=C(N,\,s,\,\lambda)>0$ such that, for any measurable set $F\subset\mathbb{R}^N$ with $0<|F|\leq \omega_N(1+\varepsilon)^N$, we have
	\begin{equation}\label{lemma_isoperi_1}
	C\,|F|^{\frac{N-s}{N}}\leq P_{K}(F). 
	\end{equation}
\end{lemma}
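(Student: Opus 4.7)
The plan is to reduce the desired inequality to the standard fractional isoperimetric inequality, by exploiting the pointwise lower bound on $K$ contained in (K4)'.

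The first step is to notice that (K4)' yields $K(x)\geq |x|^{-(N+s)}$ for \emph{every} $x\neq 0$: the case $0<|x|<1+\varepsilon$ is the first clause of (K4)' and the case $|x|\geq 1+\varepsilon$ is the first half of the second clause. Therefore
\begin{equation*}
P_K(F)\;\geq\;\int_F\!\int_{F^c}\frac{dx\,dy}{|x-y|^{N+s}}\;=:\;P_s(F).
\end{equation*}

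The second step is to apply the classical fractional isoperimetric inequality to $P_s$: there exists $c=c(N,s)>0$ such that $P_s(F)\geq c\,|F|^{(N-s)/N}$ for every measurable $F\subset\mathbb{R}^N$ of positive finite volume. The cleanest way I would argue this is by symmetric decreasing rearrangement. Let $F^*$ be the centered ball with $|F^*|=|F|$. Then Riesz's rearrangement inequality applied to the radial decreasing kernel $|\cdot|^{-(N+s)}$ gives $P_s(F)\geq P_s(F^*)$, and the scaling identity $P_s(B_r)=r^{N-s}P_s(B_1)$ converts this into the claimed power $|F|^{(N-s)/N}$ with an explicit constant depending only on $N$ and $s$. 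Combining with the first step yields \eqref{lemma_isoperi_1}.

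A couple of remarks on the statement are worth making in the write-up. First, the dependence of the constant on $\lambda$ announced in the lemma appears to be conservative, since only the \emph{lower} bound from (K4)' enters the argument. Second, the hypothesis $|F|\leq \omega_N(1+\varepsilon)^N$ is not strictly needed for the route above, but under it $F$ fits inside a single ball of radius $1+\varepsilon$, so one can alternatively obtain the inequality by a direct shell-rearrangement of $F^c\cap B_{1+\varepsilon}(x)$ inside $B_{1+\varepsilon}(x)$ for each $x\in F$, thereby avoiding any appeal to Riesz rearrangement on all of $\mathbb{R}^N$. The only nontrivial ingredient is the fractional isoperimetric inequality itself, which is classical, so no real obstacle arises; the bulk of the work is organizational (invoking the right reference and keeping track of the explicit constant coming from $P_s(B_1)$).
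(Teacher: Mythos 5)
Your proof is correct, but it takes a genuinely different route from the paper's. You invoke the classical fractional isoperimetric inequality $P_s(F)\geq c(N,s)\,|F|^{(N-s)/N}$ and reduce to it via the pointwise lower bound $K\geq |\cdot|^{-(N+s)}$ from (K4)'. The paper, by contrast, gives a short self-contained argument that never cites the fractional isoperimetric inequality: for each $y\in F$ it splits $F^c$ into $F^c\cap B_\rho(y)$ and $F^c\cap B_\rho(y)^c$ with $\rho$ chosen so that $|B_\rho|=|F|$, uses the mass-balance identity $|F^c\cap B_\rho(y)|=|F\cap B_\rho(y)^c|$ to ``fill in'' the complement inside the ball, and then bounds the result below by $\frac{1}{\lambda}\int_F\int_{B_\rho(y)^c}|x-y|^{-(N+s)}\,dx\,dy$, which is computed explicitly. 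That direct route uses both the lower \emph{and} upper bounds in (K4)'---hence the $\lambda$ in the constant---and also needs $\rho\leq 1+\varepsilon$ (i.e.\ the volume restriction) to control $K$ on the far region; you correctly observe that both of those dependencies disappear if one is willing to cite the fractional isoperimetric inequality, so your version is in fact sharper. The trade-off is that the paper's argument is elementary and produces the explicit constant $\omega_{N-1}\omega_N^{s/N}/(\lambda s)$ without any external input.

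One small caution in your sketch: applied naively to $\int_F\int_{F^c}k$, Riesz rearrangement gives an upper bound, not the lower bound $P_s(F)\geq P_s(F^*)$ you want. The standard fix is to truncate the kernel, write $\int_F\int_{F^c}k_\delta=|F|\,\|k_\delta\|_{L^1}-\int_F\int_F k_\delta$, apply Riesz rearrangement to the subtracted (bounded) double integral over $F\times F$, and then let $\delta\to0$. Since you are citing the fractional isoperimetric inequality as a known result this is not a gap, but if you intend the rearrangement sketch to serve as a proof it should be phrased this way.
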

\begin{proof}
	If $P_{K}(F)=\infty$, then the lemma is proved. Thus, we can assume that $P_K(F)$ is finite. Setting $\rho:=\omega_N^{-1/N}|F|^{1/N}\in(0,\,1+\varepsilon]$, we may compute the nonlocal perimeter $P_{K}(F)$ as follows:
	\begin{align}\label{pfIsoperi1}
	\int_{F}\int_{F^c}K(x-y)\,dx\,dy & = \int_{F}\int_{F^c\cap B_{\rho}(y)}K(x-y)\,dx\,dy + \int_{F}\int_{F^c\cap (B_{\rho}(y))^c}K(x-y)\,dx\,dy \nonumber\\
	&\geq \frac{1}{\rho^{N+s}}\int_{F}|F^c\cap B_{\rho}(y)|\,dy + \int_{F}\int_{F^c\cap (B_{\rho}(y))^c}K(x-y)\,dx\,dy.
	\end{align}
	Here, by the assumption of $F$, we have $|F|=\omega_N\rho^N=|B_{\rho}(y)|$ for any $y$. Thus, it holds that
	\begin{align}\label{pfIsoperi2}
	|A^c\cap B_{\rho}(y)| &=|B_{\rho}(y)|- |A\cap B_{\rho}(y)|\nonumber\\
	&= |A| - |A\cap B_{\rho}(y)| = |A\cap (B_{\rho}(y))^c|
	\end{align}
	for any $y\in F$. Thus, from \eqref{pfIsoperi1} and \eqref{pfIsoperi2} and recalling the assumption of $K$ and the fact that $\lambda>1$ and $\rho<1+\varepsilon$, we obtain the following inequalty:
	\begin{align}\label{pfIsoperi3}
	\int_{F}\int_{F^c}K(x-y)\,dx\,dy &\geq  \frac{1}{\rho^{N+s}}\int_{F}|F\cap (B_{\rho}(y))^c|\,dy + \int_{F}\int_{F^c\cap (B_{\rho}(y))^c}K(x-y)\,dx\,dy \nonumber\\
	&\geq  \frac{1}{\lambda}\int_{F}\int_{F\cap(B_{\rho}(y))^c}K(x-y)\,dx\,dy +\int_{F}\int_{F^c\cap (B_{\rho}(y))^c}K(x-y)\,dx\,dy\nonumber\\
	&\geq \frac{1}{\lambda} \int_{F}\int_{(B_{\rho}(y))^c}K(x-y)\,dx\,dy\nonumber\\
	&\geq \frac{1}{\lambda} \int_{F}\int_{(B_{\rho}(y))^c}\frac{1}{|x-y|^{N+s}}\,dx\,dy.
	\end{align}
	Moreover, by applying the change of variables, we can further calculate the last term in \eqref{pfIsoperi3} in the following manner:
	\begin{align}\label{pfIsoperi4}
	\int_{F}\int_{(B_{\rho}(y))^c}\frac{1}{|x-y|^{N+s}}\,dx\,dy &= |F|\,\omega_{N-1}\,\int_{\rho}^{\infty}\frac{1}{r^{1+s}}\,dx\,dy \nonumber\\
	&= |F|\frac{\omega_{N-1}}{s}\rho^{-s} = \frac{\omega_{N-1}\omega_N^{\frac{s}{N}}}{s}|F|^{\frac{N-s}{N}}.
	\end{align}
	Therefore, from \eqref{pfIsoperi3} and \eqref{pfIsoperi4}we obtain
	\begin{equation}\label{pfIsoperi5}
	P_{K}(F)\geq \frac{\omega_{N-1}\omega_N^{\frac{s}{N}}}{\lambda\,s}|F|^{\frac{N-s}{N}}
	\end{equation}
	which is a required inequality.
\end{proof}

Now we are prepared to prove Lemma \ref{mainLemma} by applying the above two claims. The proof is based on the strategy shown in \cite{CN} for instance.
\begin{proposition}\label{boundMinimizer}
	Suppose that $K$ satisfies (K1), (K2), (K3), and (K4)'. Every solution $E$ of
	\begin{equation}\label{propBound}
		\min\{\mathcal{F}_{(K,\,A)}(E)\mid E\subset \mathbb{R}^N\,\text{measurable},\,|E|=m\}
	\end{equation}
	is bounded.
\end{proposition}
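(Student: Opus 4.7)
The plan is to argue by contradiction. Suppose $E$ is an unbounded minimizer and set $m(R) := |E \setminus B_R|$. Unboundedness gives $m(R) > 0$ for every $R > 0$, while $|E|<\infty$ implies $m(R) \to 0$ as $R \to \infty$. Fix $x_0 \in \partial^* E$ and $r_0 > 0$ so that the density assumption \eqref{densityassumptions} holds for $E$ at $x_0$, and choose $R$ large enough that $B_{r_0}(x_0) \Subset B_R$, $m(R) < k_0$ (the threshold from Lemma \ref{almgrenLemma}), and $m(R) \leq \omega_N(1+\varepsilon)^N$ (so that Lemma \ref{isoperiInequalityKernel} applies to $T := E \setminus B_R$).

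Setting $B := E \cap B_R$, the truncation still satisfies \eqref{densityassumptions} at $x_0$. Lemma \ref{almgrenLemma} applied to $B$ with mass change $k = m(R)$ therefore produces a competitor $F$ with $|F| = m$ and $\mathcal{F}_{(K,\,A)}(F) \leq \mathcal{F}_{(K,\,A)}(B) + C\,m(R)$. The minimality of $E$ then yields $\mathcal{F}_{(K,\,A)}(E) \leq \mathcal{F}_{(K,\,A)}(B) + C\,m(R)$. Expanding this via the additivity identities
\begin{equation*}
P_K(E) = P_K(B) + P_K(T) - 2\int_B\int_T K,\quad V_1(E) = V_1(B) + V_1(T) + \int_B\int_T\tfrac{1}{|x-y|},\quad R(E) = R(B) + R(T),
\end{equation*}
discarding the nonnegative $V_1(T)$ and $\int_B\int_T|x-y|^{-1}$ contributions, and using $R(T) \leq m(R)/R \leq m(R)$ for $R \geq 1$, I arrive at the fundamental inequality
\begin{equation*}
P_K(T) - 2\,I(R) \leq C'\,m(R),\qquad I(R) := \int_B\int_T K(x-y)\,dx\,dy.
\end{equation*}
Combined with the isoperimetric lower bound $P_K(T) \geq c\,m(R)^{(N-s)/N}$ from Lemma \ref{isoperiInequalityKernel}, this reads
\begin{equation*}
c\,m(R)^{(N-s)/N} - C'\,m(R) \leq 2\,I(R).
\end{equation*}

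The \textbf{main obstacle} is to control the interaction $I(R)$ strictly below $\tfrac{c}{2}\,m(R)^{(N-s)/N}$ for a suitable radius. The key geometric observation is that for $y \in T$ with $\tau := |y| - R > 0$ one has $|x-y| \geq \tau$ for every $x \in B \subset B_R$; the polynomial upper bound in (K4)' then gives $\int_B K(x-y)\,dx \leq C\,\tau^{-s}$ and hence $I(R) \leq C\int_T (|y|-R)^{-s}\,dy$. I would then choose $R$ carefully---for instance, via an averaging over a dyadic window $[R_0,2R_0]$ combined with a layer-cake decomposition of $\int_T (|y|-R)^{-s}\,dy$ in terms of $m$---to extract a radius $R^* \in [R_0, 2R_0]$ at which $I(R^*)$ is negligible compared to $m(R^*)^{(N-s)/N}$ when $R_0$ is large. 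Inserting this into the above inequality forces $m(R^*) \geq c_0 > 0$, in contradiction with $m(R) \to 0$. If a single-step estimate does not close directly, the resulting polynomial decay $m(R) \leq CR^{-\beta}$ is fed back into the interaction bound and the argument iterated, along the lines of the strategy in \cite{CN}, eventually forcing $m(R) = 0$ for all $R$ large enough.
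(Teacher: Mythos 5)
Your proposal follows the paper's strategy almost step for step through the isoperimetric inequality: argue by contradiction, fix a ball where the density assumption holds, invoke Almgren's Lemma (Lemma~\ref{almgrenLemma}) to build a competitor that restores volume after truncation, expand via the additivity identities \eqref{nonlocalPeriEquali}--\eqref{coulombPotEquali}, and invoke Lemma~\ref{isoperiInequalityKernel} to get $c\,m(R)^{(N-s)/N} - C'\,m(R) \leq 2I(R)$. The geometric observation that $|x-y|\geq |y|-R$ for $x\in B_R$, $y\notin B_R$ and the resulting bound $I(R)\lesssim \int_T(|y|-R)^{-s}\,dy + m(R)$ are also what the paper does (in \eqref{pfBound2}, it is written equivalently as $-\int_r^\infty(t-r)^{-s}f'(t)\,dt$ after the coarea formula). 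Up to here your outline matches the paper.

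The gap is in the closing step. You propose a dyadic-window averaging argument (find $R^*\in[R_0,2R_0]$ where $I(R^*)$ is negligible compared to $m(R^*)^{(N-s)/N}$), with a fallback of iterating a polynomial decay estimate $m(R)\leq CR^{-\beta}$. Two problems. First, the averaging heuristic does not obviously close: averaging $\int_T(|y|-R)^{-s}\,dy$ over $R\in[R_0,2R_0]$ gives roughly $R_0^{-s}m(R_0)$, but you need to beat $m(R^*)^{(N-s)/N}$ where $R^*$ is chosen to make $I$ small, not $m$; in the worst case $m(R^*)$ could be much smaller than $m(R_0)$, and the comparison need not hold without a lower bound on $m(R^*)$, which you do not have. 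Second, and more seriously, even if an iteration along the lines you sketch yields ever-improving polynomial (or even super-polynomial) decay of $m(R)$ as $R\to\infty$, that does \emph{not} establish $m(R)=0$ for finite $R$, which is what boundedness requires. What the paper actually does is integrate the pointwise inequality over the semi-infinite interval $(R,\infty)$, swap the order of integration (Fubini, \eqref{pfBound8}), integrate by parts (\eqref{pfBound9}), and absorb the lower-order terms to reach a \emph{closed integral inequality}
\begin{equation*}
C_{10}\int_R^\infty f(r)^{\frac{N-s}{N}}\,dr \leq f(R),
\end{equation*}
valid for all $R$ beyond a threshold. From this, a De Giorgi--type iteration on the discrete radii $R_k := R+1-2^{-k}$ (so that the step sizes shrink geometrically while $\sum 2^{-k}=1$) exploits $\alpha=(N-s)/N<1$ to force $f(R_k)\to 0$, which contradicts $f(R+1)>0$ and yields $f\equiv 0$ on $[R+1,\infty)$. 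This is the piece missing from your sketch: the interaction bound must be turned into a recursion over a \emph{finite} window with vanishing increments, not into an asymptotic decay rate of $m(R)$.

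A secondary technical point: you apply Lemma~\ref{almgrenLemma} directly to $B=E\cap B_R$, which requires first checking $P_K(B)+V_1(B)+R(B)<\infty$; the paper avoids this by applying Almgren to the full minimizer $E$ (whose energy is finite) and intersecting with $B_r$ afterward. This can be patched, but as written it is an unverified hypothesis.
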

\begin{proof}
	Let $E$ be a minimizer of $\min_{|E|}\mathcal{F}_{(K,\,A)}(E)$. For any $r>0$, we define $f(r):=|E\setminus B_r(0)|$. Then, by the continuity of measure and $|E|=m$, $f$ is a non-increasing function and converges to zero as $r\to\infty$. Moreover, the coarea formula implies 
	\begin{equation}\label{prop_1}
	f^{\prime}(r)= - \mathcal{H}^{N-1}(E\cap \partial B_r(0)).
	\end{equation}
	We now show that there exists $R>0$ such that $f(r)=0$ for all $r>R$. This implies the boundedness of minimizers because $\mathcal{F}_{(K,\,A)}(E)$ coincides with $\mathcal{F}_{(K,\,A)}(E')$ if $E\setminus E'$ is a set of Lebesgue measure zero and thus we can identify $E$ with $E'$. Suppose by contradiction that $f(r)>0$ for any $r>0$. Without loss of generality, we can assume that $|E\cap B_1(0)|>0$ and $|\mathcal{C}E\cap B_1(0)|>0$. Choosing $k_0$ as in Lemma \ref{almgrenLemma}, we may fix $R_0>0$ such that $f(r)<k_0$ for any $r\geq R_0$. Then, by Lemma \ref{almgrenLemma}, for any $r\geq R_0$, there exists $F_r\subset\mathbb{R}^N$ such that the followings are true:
	\begin{enumerate}
		\item $E\Delta F_r \subset\subset B_1(0) \subset B_r(0)$.
		\item $|F_r| = |E| + f(r)$.
		\item $|\mathcal{F}_{(K,\,A)}(E)-\mathcal{F}_{(K,\,A)}(F_r)| < C\,f(r)$ for some constant $C>0$ independent of $r>0$.
	\end{enumerate}
	Now, letting $G_r:=F_r\cap B_r(0)$ and recalling the first and second properties of Lemma \ref{almgrenLemma}, we have that $|G_r| = |E|$. Here, in the same way as in \cite{CNRV} for instance, one can prove the equality for a nonlocal perimeter
	\begin{equation}\label{nonlocalPeriEquali}
	P_K(U) + P_K(W) = P_K(U\cup W) + 2\int_{U}\int_{W} K(x-y)\,dx\,dy.
	\end{equation}
	for any measurable sets $U,\,W\subset\mathbb{R}^N$ such that $|U\cap W|=0$. Moreover, we also have the equality for a potential functional
	\begin{equation}\label{coulombPotEquali}
	V_1(U\cup W) = V_1(U) + V_1(W) + \int_{U}\int_{W}\frac{1}{|x-y|}\,dx\,dy.
	\end{equation}
	for any measurable $U,\,W\subset\mathbb{R}^N$ such that $|U\cap W|=0$. Therefore, by the minimality of $E$ and using \eqref{nonlocalPeriEquali} and \eqref{coulombPotEquali}, we obtain
	\begin{align}\label{pfBound1}
	\mathcal{F}_{(K,\,A)}(E)\leq \mathcal{F}_{(K,\,A)}(G_r) &= P_{K}(G_r) + V_1(G_r) - A\,R(G_r)\nonumber\\
	&\leq P_K(F_r)- P_{K}(F_r\setminus B_r(0)) + V_1(F_r)- V_1(F_r\setminus B_r(0)) \nonumber\\
	&\quad \qquad -A\,R(F_r) + A\,R(F_r\setminus B_r(0))+2\int_{F_r\setminus B_r}\int_{F_r\cap B_r}K(x-y)\,dx\,dy \nonumber\\
	&\leq \mathcal{F}_{(K,\,A)}(E) + C\,f(r) - P_{K}(F_r\setminus B_r(0)) \nonumber\\
	&\quad\qquad +A\int_{F_r\setminus B_r}\frac{1}{|x|}\,dx + 2\int_{F_r\setminus B_r}\int_{F_r\cap B_r}K(x-y)\,dx\,dy. 
	\end{align}
	Note that we also used the third property of Lemma \ref{almgrenLemma} in the last inequality of \eqref{pfBound1}. Since $E\setminus B_r(0)= F_r\setminus B_r(0)$, it holds that
	\begin{align}\label{pfBound1.1}
		\int_{F_r\setminus B_r}\frac{1}{|x|}\,dx = \int_{E\setminus B_r}\frac{1}{|x|}\,dx \leq \frac{1}{r}\,|E\setminus B_r| = \frac{f(r)}{r}.
	\end{align}
	By the assumption of $K$ and using the coarea formula, we have that
	\begin{align}\label{pfBound2}
	\int_{F_r\setminus B_r}\int_{F_r\cap B_r}K(x-y)\,dx\,dy &= \int_{E\setminus B_r}\int_{F_r\cap B_r\cap B_{1+\varepsilon}(y)}K(x-y)\,dx\,dy \nonumber\\
	&\quad \qquad +\int_{E\setminus B_r}\int_{F_r\cap B_r\cap B^{c}_{1+\varepsilon}(y)}K(x-y)\,dx\,dy \nonumber\\
	&\leq \int_{E\setminus B_r}\int_{B_r\cap B_{1+\varepsilon}(y)}K(x-y)\,dx\,dy \nonumber\\
	&\quad\qquad +\int_{E\setminus B_r}\int_{F_r\cap B^c_{1+\varepsilon}(y)}K(x-y)\,dx\,dy \nonumber\\
	&\leq \lambda\int_{E\setminus B_r}\int_{B_{|y|-r}(y)}\frac{1}{|x-y|^{N+s}}\,dx\,dy \nonumber\\
	&\quad\qquad + \frac{1}{(1+\varepsilon)^{N+s}}\int_{E\setminus B_r}|F_r|\,dy \nonumber\\
	&\leq \frac{\lambda\omega_{N-1}}{s}\int_{E\setminus B_r} \frac{1}{(|y|-r)^s}\,dy + \frac{1}{(1+\varepsilon)^{N+s}}f(r)\,|E| \nonumber\\
	&\leq \frac{\lambda\omega_{N-1}}{s}\int_{r}^{\infty} \frac{1}{(t-r)^s}\,\mathcal{H}^{N-1}(E\cap \partial B_t(0))\,dt+ \frac{m}{(1+\varepsilon)^{N+s}}f(r)\nonumber\\
	&= - \frac{\lambda\omega_{N-1}}{s}\int_{r}^{\infty}\frac{1}{(t-r)^s}f'(t)\,dt+ \frac{m}{(1+\varepsilon)^{N+s}}f(r).
	\end{align}
	Substituting \eqref{pfBound2} in \eqref{pfBound1}, we obtain
	\begin{equation}\label{pfBound3}
	P_{K}(E\setminus B_r(0)) \leq C_7\,f(r) - \frac{\lambda\omega_{N-1}}{s}\int_{r}^{\infty}\frac{1}{(t-r)^s}f'(t)\,dt
	\end{equation}
	for some constant $C_7>0$ independent of $r>0$. For sufficiently large $r>0$, we can assume that $|E\setminus B_r(0)| < \omega_N(1+\varepsilon)^N$ and thus, by the isoperimetric inequality stated in Lemma \ref{isoperiInequalityKernel}, it holds that 
	\begin{equation}\label{pfBound4}
	C_8\,|E\setminus B_r(0)|^{\frac{N-s}{N}} \leq P_{K}(E\setminus B_r(0)),
	\end{equation}
	where $C_8$ is a positive constant independent of $r$. Thus, combining \eqref{pfBound3} with \eqref{pfBound4}, we have
	\begin{equation}\label{pfBound5}
	C_8\,f(r)^{\frac{N-s}{N}} \leq C\,f(r) -\frac{\lambda\omega_{N-1}}{s}\int_{r}^{\infty}\frac{1}{(t-r)^s}f'(t)\,dt.
	\end{equation}
	Recalling the fact that $f(r)\to0$ as $r\to\infty$, we can choose $R_1>R_0$ such that
	\begin{equation}\label{pfBound6}
	C_9\,f(r) \leq \frac{C_9}{2}f(r)^{\frac{N-s}{N}}
	\end{equation}
	for any $r\geq R_1$. Therefore, for all $r\geq R_1$, we obtain
	\begin{equation}\label{pfBound7}
	\frac{s\,C_9}{\lambda\omega_{N-1}}f(r)^{\frac{N-s}{N}}\leq -\int_{r}^{\infty}\frac{1}{(t-r)^s}f'(t)\,dt.
	\end{equation}
	We integrate over $(R,\,\infty)$ where $R\geq R_1$ and then we exchange the order of integration to obtain 
	\begin{align}\label{pfBound8}
	\frac{s\,C_9}{\lambda_1\omega_{N-1}}\int_{R}^{\infty}f(r)^{\frac{N-s}{N}}\,dr&\leq -\int_{R}^{\infty}\int_{r}^{\infty}\frac{1}{(t-r)^s}f'(t)\,dt\,dr\nonumber\\
	&= - \int_{R}^{\infty}\int_{R}^{t}\frac{1}{(t-r)^s}f'(t)\,dr\,dt = -\frac{1}{1-s}\int_{R}^{\infty}f'(t)(t-R)^{1-s}\,dt. 
	\end{align}
	Moreover, recalling $f(r)\to0$ as $r\to\infty$, we have that
	\begin{align}\label{pfBound9}
	-\int_{R}^{\infty}f'(t)(t-R)^{1-s}\,dt&= -\int_{R}^{R+1}f'(t)(t-R)^{1-s}\,dt- \int_{R+1}^{\infty}f'(t)(t-R)^{1-s}\,dt \nonumber\\
	&\leq f(R) - f(R+1) - \int_{R+1}^{\infty}f'(t)(t-R)^{1-s}\,dt \nonumber\\
	&\leq  f(R) + \int_{R+1}^{\infty}f'(t)(1-(t-R)^{1-s})\,dt \nonumber\\
	&\leq  f(R) + (1-s)\int_{R+1}^{\infty}f(t)(t-R)^{-s}\,dt\leq f(R)+\int_{R}^{\infty}f(t)\,dt.
	\end{align}
	Thus, substituting \eqref{pfBound9} with \eqref{pfBound8}, we obtain
	\begin{equation}\label{pfBound10}
	\frac{s\,C_9}{\lambda\omega_{N-1}}\int_{R}^{\infty}f(r)^{\frac{N-s}{N}}\,dr \leq f(R)+\int_{R}^{\infty}f(r)\,dr.
	\end{equation}
	Since $f(r)$ is small for sufficiently large $R>R_1$, we may assume 
	\begin{equation}\label{pfBound11}
	2\int_{R}^{\infty}f(r)\,dr \leq \frac{s\,C_9}{\lambda\omega_{N-1}}\int_{R}^{\infty}f(r)^{\frac{N-s}{N}}\,dr
	\end{equation}
	for any $R>R_2$ and some $R_2>R_1$. Therefore, we conclude that, for any $R>R_2$,
	\begin{equation}\label{pfBound12}
	C_{10}\int_{R}^{\infty}f(r)^{\frac{N-s}{N}}\,dr \leq f(R),
	\end{equation}
	where $C_{10}=C_{10}(N,\,s,\,\lambda):=\frac{s\,C_9}{2\lambda\omega_{N-1}}>0$.
	
	Let $R>R_2$ be fixed such that $w_0=|E\setminus B_{R}(0)|>0$ is sufficiently small. For any $k\in\mathbb{Z}$ with $k\geq 0$, we set $\alpha:=\frac{N-s}{N}$, $R_k:=R+1-2^{-k}$, and $w_k:=f(R_k)$. Then, from \eqref{pfBound12}, we have that $R_k\to R_{\infty}:=R+1$ as $k\to\infty$ and
	\begin{equation}\label{pfBound13}
	C_{10}\,2^{-(k+1)}w_{k+1}^{\alpha} \leq w_k
	\end{equation}
	for any $k$. Then, by iterating this estimate and recalling that $w_0$ can be chosen sufficiently small, we obtain that $w_k \to 0$ as $k\to\infty$. However, by the assumption, we also have that $\lim_{k\to\infty}w_{k}=f(R+1)=|E\setminus B_{R+1}(0)|>0$, which is a contradiction.
\end{proof}

\section{Nonexistence of minimizers for $\mathcal{F}_{(K,\,A)}$}\label{nonexisMinimizer}
In this section, we show the proof of Theorem \ref{mainTheorem}. First of all, we define the quantity
\begin{equation}\label{minimumFunctional}
	F_{(K,\,A)}[m]:= \inf\{\mathcal{F}_{(K,\,A)}(E)\mid|E|=m,\,E:\text{bounded}\}
\end{equation}
for a given number $m\in(0,\,\infty)$. Because of the last term of the functional \eqref{nonlocalFunc2}, we cannot expect the subadditivity of the functional $\mathcal{F}_{(K,\,A)}$. Moreover, if we decompose the Riesz potential $V_1(E)$ into two parts $V_1(E_1)$ and $V_1(E_2)$ where $E=E_1\cup E_2$, then we usually gain the energy while the nonlocal perimeter $P_K$ is not the case. This also implies the invalidity of the subadditivity. However, since we can move the two bounded sets far away from each other to decrease the extra potential energy arising from the decomposition, we may have a weak version of the subadditivity in terms of the quantity \eqref{minimumFunctional} as follows:
\begin{lemma}\label{weakSubadditive}
	Suppose that $K$ satifies (K1) and (K2). Let $m_1$ and $m_2$ be a positive number. Then, it holds that
	\begin{equation}\label{subadditivity}
		F_{(K,\,A)}[m_1+m_2] \leq  F_{(K,\,A)}[m_1] + F_{(K,\,0)}[m_2].
	\end{equation}
\end{lemma}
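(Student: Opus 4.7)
The plan is the standard one for weak subadditivity: take near-minimizing bounded competitors for $F_{(K,\,A)}[m_1]$ and $F_{(K,\,0)}[m_2]$ and combine them by a large translation, so that every cross-interaction term and the background-potential contribution of the translated piece become negligible.

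Fix $\eta>0$. If either $F_{(K,\,A)}[m_1]$ or $F_{(K,\,0)}[m_2]$ equals $-\infty$ there is nothing to prove, so I assume both are finite and choose bounded sets $E_1,E_2\subset\mathbb{R}^N$ with $|E_i|=m_i$ and
\[
\mathcal{F}_{(K,\,A)}(E_1)\leq F_{(K,\,A)}[m_1]+\eta,\qquad \mathcal{F}_{(K,\,0)}(E_2)\leq F_{(K,\,0)}[m_2]+\eta.
\]
Pick $r>0$ with $E_1\cup E_2\subset B_r(0)$, fix $\nu\in\mathbb{S}^{N-1}$, and for $t>4r$ set $E^{(t)}:=E_1\cup(E_2+t\nu)$. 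The two pieces are disjoint, $E^{(t)}$ is bounded, and $|E^{(t)}|=m_1+m_2$. Applying the identities \eqref{nonlocalPeriEquali} and \eqref{coulombPotEquali} to $U=E_1$, $W=E_2+t\nu$, together with the translation invariance of $P_K$ and $V_1$ and the additivity of $R$ on disjoint unions, I obtain
\[
\mathcal{F}_{(K,\,A)}(E^{(t)})=\mathcal{F}_{(K,\,A)}(E_1)+\mathcal{F}_{(K,\,0)}(E_2)-2\,\mathcal{I}_K(t)+\mathcal{I}_V(t)-A\,\mathcal{I}_R(t),
\]
where $\mathcal{I}_K(t):=\int_{E_1}\int_{E_2+t\nu}K(x-y)\,dx\,dy$, $\mathcal{I}_V(t):=\int_{E_1}\int_{E_2+t\nu}|x-y|^{-1}\,dx\,dy$, and $\mathcal{I}_R(t):=\int_{E_2+t\nu}|x|^{-1}\,dx$.

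The core of the argument is to let $t\to\infty$ and show that the three cross terms all vanish. For $\mathcal{I}_V$ and $\mathcal{I}_R$ the crude bounds $|x-y|\geq t-2r$ and $|x|\geq t-r$ immediately give $O(1/t)$ decay. The only nonroutine estimate is for $\mathcal{I}_K(t)$: after the substitution $z=x-y-t\nu$, the variable $z$ lies in $\{|z|\geq t-2r\}$, so
\[
\mathcal{I}_K(t)\leq |E_2|\int_{|z|\geq t-2r}K(z)\,dz,
\]
which tends to $0$ as $t\to\infty$ by (K2). Passing to the limit $t\to\infty$ in $F_{(K,\,A)}[m_1+m_2]\leq \mathcal{F}_{(K,\,A)}(E^{(t)})$ and then letting $\eta\downarrow 0$ yields \eqref{subadditivity}. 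The step I expect to be the main (though still mild) obstacle is precisely the vanishing of $\mathcal{I}_K(t)$: this is where the hypothesis $K\in L^1(B_1^c)$ is used in an essential way, since without (K2) the two far-apart pieces could continue to interact through the nonlocal perimeter and the argument would collapse.
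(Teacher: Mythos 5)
Your argument is correct and follows essentially the same route as the paper: take near-minimizing bounded competitors, translate one far away, use the decomposition identities \eqref{nonlocalPeriEquali}--\eqref{coulombPotEquali}, and let the separation parameter tend to infinity. However, you have misidentified where the work is. After the decomposition, the identity reads
\[
\mathcal{F}_{(K,\,A)}(E^{(t)})=\mathcal{F}_{(K,\,A)}(E_1)+\mathcal{F}_{(K,\,0)}(E_2)-2\,\mathcal{I}_K(t)+\mathcal{I}_V(t)-A\,\mathcal{I}_R(t),
\]
and by (K1) the term $-2\,\mathcal{I}_K(t)$ is already nonpositive, as is $-A\,\mathcal{I}_R(t)$. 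Both can simply be discarded, leaving only the Riesz cross-term $\mathcal{I}_V(t)=O(1/t)$ to control --- which is exactly what the paper does, giving $\mathcal{F}_{(K,\,A)}(E^{(t)})\le\mathcal{F}_{(K,\,A)}(E_1)+\mathcal{F}_{(K,\,0)}(E_2)+\tfrac{2m_1m_2}{t}$. So your ``nonroutine estimate'' for $\mathcal{I}_K(t)$ is superfluous, and your claim that without (K2) ``the argument would collapse'' because the pieces would keep interacting through $P_K$ is not right: that interaction only helps. The actual role of (K2) is upstream, in guaranteeing that $P_K(E)<\infty$ for bounded competitors so that the infima $F_{(K,\,A)}[m_i]$ are meaningful. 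Also, as a side note, your change of variable should be $z=x-y$ (not $z=x-y-t\nu$, which lands you in $\{|z|\le 2r\}$ rather than $\{|z|\ge t-2r\}$), and the prefactor in your bound should be $|E_1|$ rather than $|E_2|$ given the order of integration you wrote; neither affects the (unneeded) conclusion.
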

\begin{proof}[Proof of Lemma \ref{weakSubadditive}]
	The proof can be done in a similar way with in \cite{luOtto}, and thus, we basically follow their strategy. Let $\varepsilon>0$ be an arbitrary number. Then, by the definition of \eqref{minimumFunctional}, there exist bounded subsets $E_1,\,E_2\subset\mathbb{R}^N$ with the volume constraints $|E_1|=m_1$ and $|E_2|=m_2$ such that
	\begin{equation}\label{pfLemma1}
		\mathcal{F}_{(K,\,A)}(E_1) + \mathcal{F}_{(K,\,0)}(E_2) \leq F_{(K,\,A)}[m_1]+ F_{(K,\,0)}[m_2] + \varepsilon.
	\end{equation}
	Since $E_1,\,E_2$ are bounded, we can find a sufficiently large number $d=d(\varepsilon)>0$ such that $\dist(E_1,\,(E_2+d\,e_1))\geq d/2$. Then, from \eqref{nonlocalPeriEquali} and \eqref{coulombPotEquali}, we may calculate as follows:
	\begin{align}\label{pfLemma2}
		\mathcal{F}_{(K,\,A)}(E_1\cup(E_2+d\,e_1)) &= P_K(E_1\cup(E_2+d\,e_1)) + V_1(E_1\cup(E_2+d\,e_1)) - A\,R(E_1\cup(E_2+d\,e_1) \nonumber\\
		&\leq P_K(E_1) + P_K(E_2+d\,e_1) + V_1(E_1) + V_1(E_2+d\,e_1) \nonumber\\
		&\quad \qquad+ \int_{E_1}\int_{E_2+d\,e_1}\frac{1}{|x-y|}\,dx\,dy - A\,R(E_1) \nonumber\\
		&\leq \mathcal{F}_{(K,\,A)}(E_1) + \mathcal{F}_{(K,\,0)}(E_2) + \frac{2m_1\,m_2}{d}.
	\end{align}
	Note that $P_K$ and $V_1$ is invariant under translations and $|x-y|\geq d/2$ for any $x\in E_1$ and $y\in E_2+d\,e_1$. Hence, by the definition of \eqref{minimumFunctional}, we obtain
	\begin{equation}\label{pfLemma3}
		F_{(K,\,A)}[m_1+m_2] \leq F_{(K,\,A)}[m_1]+ F_{(K,\,0)}[m_2] + \varepsilon+ \frac{2m_1\,m_2}{d}.
	\end{equation}
	Letting $d\to \infty$, and then $\varepsilon\to 0$, we conclude that the lemma holds.
\end{proof}

\begin{proof}[Proof of Theorem \ref{mainTheorem}]\label{proofMainThe}
	First of all, we prove the former claim of the main theorem. To do this, we assume that $K$ satisfies (K1), (K2), and (K4). We suppose that there exists a bounded minimizer $E\subset\mathbb{R}^N$ with $|E|=m$ of \eqref{minimumFunctional} for given $m$. Then we will show that $m$ actually satisfies the opposite ineqality to \eqref{volumeCondition}. 
	In order to divide $\mathbb{R}^N$ into two parts, we define the hyperplane $H_{\nu,\,l}$ by $H_{\nu,\,l}:=\{x\in\mathbb{R}^N\mid x\cdot\nu =l\}$ for any parameters $\nu\in\mathbb{S}^{N-1}$ and $l\in\mathbb{R}$. Moreover, we set 
	\begin{equation}\label{halfPlaneSet}
		H^+_{\nu,\,l}:=\{x\in\mathbb{R}\mid x\cdot \nu \geq l\},\quad H^{-}_{\nu,\,l}:=\mathbb{R}^{N-1}\setminus H^{+}_{\nu,\,l}.
	\end{equation}
	and
	\begin{equation}\label{restHalfPlane}
		E^{+}_{\nu,\,l}:=E\cap H^+_{\nu,\,l},\quad E^{-}_{\nu,\,l}:= E\cap H^{-}_{\nu,\,l}
	\end{equation}
	for any set $E\subset\mathbb{R}^{N}$ for any $\nu\in\mathbb{S}^{N-1}$ and $l\in\mathbb{R}$. Next, we want to compare the sum of the energies for $E^{+}_{\nu,\,l}$ and $E^{-}_{\nu,\,l}$ with the energy for $E$. To do this, we apply the Lemma \ref{weakSubadditive} and use the minimality of $E$ and then we have
	\begin{equation}\label{esti1}
		 \mathcal{F}_{(K,\,A)}(E) = F_{(K,\,A)}[m] \leq F_{(K,\,A)}[|E^{+}_{\nu,\,l}|] + F_{(K,\,0)}[|E^{-}_{\nu,\,l}|] \leq \mathcal{F}_{(K,\,A)}(E^{+}_{\nu,\,l}) + \mathcal{F}_{(K,\,0)}(E^{-}_{\nu,\,l}),
	\end{equation}
	Thus, it can be rewritten as
	\begin{equation}\label{esti2}
		P_K(E)+V_1(E)-A\,R(E) \leq P_K(E^{+}_{\nu,\,l})+V_1(E^{+}_{\nu,\,l})-A\,R(E^{+}_{\nu,\,l}) + P_K(E^{-}_{\nu,\,l})+V_1(E^{-}_{\nu,\,l}).
	\end{equation}
	Therefore, from \eqref{nonlocalPeriEquali} and \eqref{coulombPotEquali}, we obtain
	\begin{equation}\label{esti3}
		\int_{E^{+}_{\nu,\,l}}\int_{E^{-}_{\nu,\,l}}\frac{1}{|x-y|}\,dx\,dy \leq 2\int_{E^{+}_{\nu,\,l}}\int_{E^{-}_{\nu,\,l}} K(x-y)\,dx\,dy + A\,\int_{E^{-}_{\nu,\,l}}\frac{1}{|x|}\,dx.
	\end{equation}
	By the layer cake formula and Fubini's theorem, we may obtain the following integration result:
	\begin{align}\label{esti4}
		\int_{E^{-}_{\nu,\,0}}\frac{-x\cdot\nu}{|x|}\,dx &= \int_{E^{-}_{\nu,\,0}}\int_{-\infty}^{0}\frac{\chi_{(x\cdot\nu,\,0)}(l)}{|x|}\,dl\,dx \nonumber\\
		&= \int_{-\infty}^{0}\int_{E^{-}_{\nu,\,0}}\frac{\chi_{\{x\cdot\nu<l\}}(x)}{|x|}\,dx\,dl = \int_{-\infty}^{0}\int_{E^{-}_{\nu,\,l}}\frac{1}{|x|}\,dx\,dl.
	\end{align}
	Integrating the inequality \eqref{esti3} with respect to $l$ from $-\infty$ to 0 and substituting \eqref{esti4} for \eqref{esti3}, we have
	\begin{equation}\label{esti5}
		\int_{-\infty}^{0}\int_{E^{+}_{\nu,\,l}}\int_{E^{-}_{\nu,\,l}}\frac{1}{|x-y|}\,dx\,dy\,dl \leq 2\int_{-\infty}^{0}\int_{E^{+}_{\nu,\,l}}\int_{E^{-}_{\nu,\,l}} K(x-y)\,dx\,dy\,dl + A\,\int_{E^{-}_{\nu,\,0}}\frac{|x\cdot\nu|}{|x|}\,dx.
	\end{equation}
	By interchanging the role of $E^{+}_{\nu,\,l}$ and $E^{-}_{\nu,\,l}$ in the above calculations, we obtain
	\begin{align}\label{esti6}
		&\int_{0}^{+\infty}\int_{E^{+}_{\nu,\,l}}\int_{E^{-}_{\nu,\,l}}\frac{1}{|x-y|}\,dx\,dy\,dl \nonumber\\
		&\quad\qquad\leq 2\int_{0}^{+\infty}\int_{E^{+}_{\nu,\,l}}\int_{E^{-}_{\nu,\,l}} K(x-y)\,dx\,dy\,dl + A\,\int_{E^{+}_{\nu,\,0}}\frac{|x\cdot\nu|}{|x|}\,dx.
	\end{align}
	Thus, summing up \eqref{esti5} and \eqref{esti6}, we have that
	\begin{equation}\label{esti7}
		\int_{-\infty}^{+\infty}\int_{E^{+}_{\nu,\,l}}\int_{E^{-}_{\nu,\,l}}\frac{1}{|x-y|}\,dx\,dy\,dl \leq 2\int_{-\infty}^{+\infty}\int_{E^{+}_{\nu,\,l}}\int_{E^{-}_{\nu,\,l}} K(x-y)\,dx\,dy\,dl + A\,\int_{E}\frac{|x\cdot\nu|}{|x|}\,dx
	\end{equation} 
	Now, using the layer cake formula and Fubini's theorem again, we have that
	\begin{align}\label{esti8}
		\int_{-\infty}^{+\infty}\int_{E^{+}_{\nu,\,l}}\int_{E^{-}_{\nu,\,l}}\frac{1}{|x-y|}\,dx\,dy\,dl &= \int_{E}\int_{E}\int_{-\infty}^{+\infty}\frac{\chi_{\{x\cdot\nu<l\}}(x)\,\chi_{\{y\cdot\nu \geq l\}}(y)}{|x-y|}\,dl\,dx\,dy \nonumber\\
		&=\int_{E}\int_{E}\int_{-\infty}^{+\infty}\frac{\chi_{\{x\cdot\nu<l<y\cdot\nu\}}(l)}{|x-y|}\,dl\,dx\,dy \nonumber\\
		&= \int_{E}\int_{E} \frac{((y-x)\cdot\nu)_{+}}{|x-y|}\,dx\,dy.
	\end{align}
	For any fixed $x\in\mathbb{R}^{N}$, by the spherical polar coordinates with $x$ located on the $x_N$-axis, we obtain 
	\begin{equation}\label{sphereIntegral}
	\int_{\mathbb{S}^{N-1}}(x\cdot\nu)_{+}\,d\mathcal{H}^{N-1}(\nu) = \int_{\mathbb{S}^{N-2}}\int_{0}^{\frac{\pi}{2}} |x|\,\cos\theta\, d\theta\,d\mathcal{H}^{N-2} = \omega_{N-2}\,|x|.
	\end{equation}
	Since $\nu\in\mathbb{S}^{N-1}$ is any element, we have, by using Fubini's theorem again and \eqref{sphereIntegral}, that 
	\begin{align}\label{esti9}
		\int_{\mathbb{S}^{N-1}}\int_{E}\int_{E} \frac{((y-x)\cdot\nu)_{+}}{|x-y|}\,dx\,dy\,d\mathcal{H}^{N-1}(\nu) &= \int_{E}\int_{E} \int_{\mathbb{S}^{N-1}}\frac{((y-x)\cdot\nu)_{+}}{|x-y|}\,d\mathcal{H}^{N-1}(\nu)\,dx\,dy\nonumber\\
		&= \omega_{N-2}\,|E|^2_N.
	\end{align}
	Moreover, by Fubini's theorem and \eqref{sphereIntegral}, we also obtain
	\begin{equation}\label{esti10}
		\int_{\mathbb{S}^{N-1}}\int_{E}\frac{|x\cdot\nu|}{|x|}\,dx\,d\mathcal{H}^{N-1}(\nu) = 2\omega_{N-2}\,|E|.
	\end{equation}
	Now we consider the third term in \eqref{esti3}. By the three assumptions on $K$, and applying the calculation in \eqref{esti8} and Fubini's theorem, we may also compute as follows:
	\begin{align}\label{esti11}
		&\int_{\mathbb{S}^{N-1}}\int_{-\infty}^{+\infty}\int_{E^{+}_{\nu,\,l}}\int_{E^{-}_{\nu,\,l}} K(x-y)\,dx\,dy\,dl\,d\mathcal{H}^{N-1}(\nu) \nonumber\\
		&\quad = \int_{E}\int_{E}\int_{\mathbb{S}^{N-1}}((y-x)\cdot\nu)_{+}\,K(x-y)\,d\mathcal{H}^{N-1}(\nu)dx\,dy \nonumber\\
		&\quad =\omega_{N-2}\,\int_{E}\int_{E}|x-y|\,K(x-y)\,dx\,dy. \nonumber\\
		&\quad \leq \omega_{N-2}\,\int_{E}\int_{B_{1+\varepsilon}(y)} |x-y|\,K(x-y)\,dx\,dy + \omega_{N-2}\,\int_{E}\int_{E\cap B^c_{1+\varepsilon}(y)} |x-y|\,K(x-y)\,dx\,dy \nonumber\\
		&\quad \leq \omega_{N-2}\, \int_{E}\int_{B_{1+\varepsilon}(y)}\frac{1}{|x-y|^{N+s-1}}\,dx\,dy + \omega_{N-2} \,\int_{E}\int_{E} \frac{1}{(1+\varepsilon)^{N+s-1}}\,dx\,dy \nonumber\\
		& \quad = \omega_{N-2}\int_{\mathbb{S}^{N-1}}\int_{0}^{1+\varepsilon}\frac{1}{r^s}\,dr\,d\mathcal{H}^{N-1} + \frac{\omega_{N-2}}{(1+\varepsilon)^{N+s-1}}|E|^2_N \nonumber\\
		&\quad =  \frac{\omega_{N-2}\, \omega_{N-1}}{1-s}(1+\varepsilon)^{1-s} |E| + \frac{\omega_{N-2}}{(1+\varepsilon)^{N+s-1}}|E|^2_N.
	\end{align}
	Therefore, integrating the both sides in \eqref{esti7} with respect to $\nu$ in $\mathbb{S}^{N-1}$ and substituting \eqref{esti9}, \eqref{esti10}, and \eqref{esti11}, we obtain
	\begin{equation}\label{esti12}
		\omega_{N-2}\,|E|^2_N\leq  \frac{2\omega_{N-2}\, \omega_{N-1}}{1-s}(1+\varepsilon)^{1-s} |E| + \frac{2\omega_{N-2}}{(1+\varepsilon)^{N+s-1}}|E|^2_N + 2A\omega_{N-2}\,|E|.
	\end{equation}
	Hence, recalling $|E|=m$, we conclude that
	\begin{equation}\label{esti13}
		\left(\frac{1}{2}-\frac{1}{(1+\varepsilon)^{N+s-1}}\right)\,m \leq \frac{\omega_{N-1}\,(1+\varepsilon)^{1-s}}{1-s} + A.
	\end{equation}
	Note that, by the definition of $\varepsilon$, it holds $(1+\varepsilon)^{N+s-1}>2$.
	
	The second claim of the main theorem is readily obtained in the following manner; if we assume that $K$ satisfies (K1), (K2), (K3), and (K4)', then, by Lemma \ref{mainLemma}, every minimizer of the functional $\mathcal{F}_{(K,\,A)}$ under the volume constraint is actually bounded. Then, applying the argument shown in the above, we may derive the same result.
\end{proof}

\section{Generalization of results}\label{appendix}
In this appendix, we slightly modify the energy \eqref{nonlocalFunc2} and consider the nonexistence of its minimizers. The modification energy of \eqref{nonlocalFunc2} denoted by $\mathcal{F}_{(K,\,A,\,\beta)}$ is defined as follows:
\begin{equation}\label{appendix1}
	\mathcal{F}_{(K,\,A,\,\beta)}(E):=P_{K}(E)+V_1(E)-A\,\int_{E}\frac{1}{|x|^\beta}\,dx,
\end{equation}
for any measurable $E\subset\mathbb{R}^N$ with a volume constraint, where $\beta\in[0,\,N+1)$. Note that, if $\beta=1$, then we may obtain the same results as discussed in the above. If $\beta>1$, then we may also obtain the similar results stated in our main theorem, although the explicit value of the critical mass cannot be obtained. Indeed, if $\beta\in[0,\,N+1)$, then, by the symmetric rearrangement inequality, it holds that
	\begin{equation}\label{extendResult1}
	\int_{E}\frac{1}{|x|^{\beta}}\,dx \leq \int_{E^*}\frac{1}{|x|^{\beta}}\,dx
	\end{equation}
	where $E^*$ is the symmetric rearrangement of any measurable set $E$ with a finite volume. Thus, setting $r_E>0$ as a radius of the open ball $E^*$ with $\omega_N\,(r_{E})^N=|E|=m$ and taking $\beta<N+1$, \eqref{esti7}, \eqref{sphereIntegral}, and \eqref{esti10} into consideration, we may compute as follows:
	\begin{align}\label{extendResult2}
	\int_{\mathbb{S}^{N-1}}\int_{E^*}\frac{|x\cdot\nu|}{|x|^{\beta}}\,dx\,d\mathcal{H}^{N-1}(\nu) &= 2\omega_{N-2}\,\int_{E^*}\frac{1}{|x|^{\beta-1}}\,dx \nonumber\\ &=2\omega_{N-2}\int_{\mathbb{S}^{N-1}}\,d\mathcal{H}^{N-1}\int_{0}^{r_E}\frac{1}{r^{\beta-1}}\,r^{N-1}\,dr\nonumber\\
	&= \frac{2\omega_{N-2}\omega_{N-1}}{(N+1-\beta)\,\omega_N^{1-\frac{\beta-1}{N}}}\,m^{1-\frac{\beta-1}{N}}.
	\end{align}
	Thus, following the argument of the proof of Theorem \ref{mainTheorem} as above and using \eqref{extendResult1} and \eqref{extendResult2}, we have that
	\begin{equation}\label{extendResult3}
	\left(\frac{1}{2}-\frac{1}{(1+\varepsilon)^{N+1-s}}\right)m^2\leq \frac{\omega_{N-1}}{1-s}(1+\varepsilon)^{1-s}\,m + A\, \frac{\omega_{N-1}}{N+1-\beta} \omega_N^{-1+\frac{\beta-1}{N}}\,m^{1-\frac{\beta-1}{N}}.
	\end{equation}
	Here we set $p:=\frac{\beta-1}{N}\in[-\frac{1}{N},\,1)$ and define constants $C_1,\,C_2,\,C_3>0$ as
	\begin{equation}\label{extendResult4}
	C_1:=\left(\frac{1}{2}-\frac{1}{(1+\varepsilon)^{N+1-s}}\right),\,C_2:=\frac{\omega_{N-1}}{1-s}(1+\varepsilon)^{1-s},\,C_3:=\frac{\omega_{N-1}}{N+1-\beta} \omega_N^{-1+\frac{\beta-1}{N}}.
	\end{equation}
	Now, considering a profile of the function $\phi(x):=C_1\,x^{1+p}-C_2\,x^p-A\,C_3$ for any $x>0$, we may obtain that there exists a unique number $m_p>0$ such that $\phi(m_p)=0$ and $\phi(m)>0$ for any $m>m_p$. Therefore, imposing the same assumptions as in Theorem \ref{mainTheorem} for instance, we have that there exists a critical mass $m_p>0$ such that, for any $m>m_p$, the problem $\min\{\mathcal{F}_{(K,\,A,\,\beta)}(E)\mid |E|=m\}$ has no solutions.

\section*{\centering Acknowledgments}
The author would like to thank Matteo Novaga (University of Pisa) for fruitful discussions and for advice.

\small{

}

\end{document}